\documentclass[10pt, a4paper, reqno, english]{amsart}
\pdfoutput=1
\usepackage{amsfonts, amsthm, amsmath, amssymb, mathrsfs}

\usepackage[utf8]{inputenc}
\usepackage{lmodern}
\usepackage{array}
\usepackage[margin=3.5cm]{geometry}
\usepackage{enumitem}
\usepackage[draft=false,hidelinks]{hyperref}
\usepackage{graphics}
\usepackage{dsfont}
\usepackage{tikz}
\usetikzlibrary{shapes.geometric}

\usepackage{booktabs}
\usepackage{caption}
\usepackage[all]{xy}

\theoremstyle{plain}
\newtheorem{theorem}{Theorem}[section]

\newtheorem{lemma}[theorem]{Lemma}
\newtheorem{corollary}[theorem]{Corollary}

\theoremstyle{definition}

\newtheorem*{xremark}{Remark}
\newtheorem*{xnotation}{Notation}

\numberwithin{equation}{section}

\makeatletter
\@namedef{subjclassname@2020}{\textup{2020} Mathematics Subject Classification}
\makeatother

\begin{document}

\title[A density version for the sum of nine cubes of primes]{A density version for the sum of nine cubes of primes}

\author{Kaijie Li}
\address{School of Mathematics and Statistics, North China University of Water Resources and Electric Power, Zhengzhou, Henan 450046, P.R. China}
\email{lkjshw@163.com}
\author{Tianze Wang}
\address{Institute of Mathematics, Henan Academy of Sciences, Zhengzhou 450046, Henan, P. R. China\\
School of Mathematics and Statistics, North China University of Water Resources and Electric Power, Zhengzhou 450046, Henan, P. R. China}
\email{wtz@ncwu.edu.cn}
\author{Xiaodong Zhao*}
\address{Institute of Mathematics, Henan Academy of Sciences, Zhengzhou 450046, Henan, P. R. China}
\email{zhaoxiaodong@hnas.ac.cn}

\subjclass[2020]{Primary 11P32; secondary 11B05, 11P05}

\date{}

\keywords{Arithmetic combinatorics, prime cubes}
\thanks{*corresponding author}
\setcounter{tocdepth}{1}

\begin{abstract}
Let $\mathbb{P}$ be the set of primes. Suppose that $P_{i}$, $i=1,2,\ldots,9$ are nine subsets of $\mathbb{P}$ with relative lower density $\underline{d}_{\mathbb{P}}(P_{i})=\mu_{i}$. When $\sum_{i=1}^{9} \mu_{i}^{3}>8$, we prove that every sufficiently large odd integer can be written as $n=\sum_{i=1}^{9} p_{i}^{3}$ with $p_{i} \in P_{i}$, $i=1,2,\ldots,9$.
\end{abstract}

\maketitle
\tableofcontents

\section{Introduction}
In 1930s, mathematicians achieved remarkable results on the Waring–Goldbach problem. The celebrated Goldbach conjecture states that every even integer $n\geq 4$ can be written as the sum of two primes,
which is the most representative in the linear Waring–Goldbach framework. This problem is generally considered to lie beyond the scope of current methods. In contrast, the ternary Goldbach conjecture—which asserts that every sufficiently large odd integer is the sum of three primes—is significantly more accessible. In 1937, Vinogradov \cite{vinogradov1937} proved that every sufficiently large odd integer can be represented as the sum of three primes. Subsequently, efforts were directed toward proving a density version of Vinogradov’s three primes theorem. More precisely, let $\mathbb{P}$ denote the set of all primes. Suppose $A\subset \mathbb{P}$, define the relative lower density by
$$\underline{d}_{\mathbb{P}}(A)=\liminf\limits_{x \to \infty} \frac{|A \cap[1, x]|}{|\mathbb{P} \cap[1, x]|}.$$
In 2010, suppose that $P_{1}, P_{2}, P_{3}$ are three subsets of $\mathbb{P}$ with
$\underline {{d}}_{\mathbb{P}}(P_{1})+\underline {{d}}_{\mathbb {P}}(P_{2})+\underline {{d}}_{\mathbb {P}}(P_{3})>2,$ Li and Pan \cite{lipan2010} showed that every sufficiently large odd integer $n$ can be written as the sum $n=p_{1}+p_{2}+p_{3}$ with $p_{1} \in P_{1}, p_{2} \in P_{2}, p_{3} \in P_{3}$. In 2014, let $P_{0} \subset \mathbb{P}$ be a subset of the primes with $\underline{d}_{\mathbb{P}}(P_{0})>\frac{5}{8}$, Shao \cite{shao2014} proved that for any sufficiently large odd positive integer $N$ there exists $p_{1}, p_{2}, p_{3} \in P_{0}$ with $N=p_{1}+p_{2}+p_{3}$.

For the quadratic Waring–Goldbach problem, in 1938, Hua \cite{Hua} proved that any sufficiently large number $n \equiv 5 \pmod {24}$ can be written as the sum of five prime squares.
In 2025, for $i=1,2,3,4,5$, suppose $P_{i}$ are five subsets of the prime set $\mathbb{P}$ with lower relative density $\underline{d}_{\mathbb{P}}(P_{i})=\varepsilon_{i}$, and $\sum_{i=1}^{5} \varepsilon_{i}^{2}>4$, Tan \cite{tan} showed that for any sufficiently large integer $n \equiv 5 \pmod {24}$, there exist $p_{i} \in P_{i}$ such that $n=\sum_{i=1}^{5} p_{i}^{2}$.\\

For the general Waring–Goldbach problem, let $k\in\mathbb{Z}_{\geq 2}$ and $\underline{d}_{\mathbb{P}}(A) > 1 - 1/2k$, in 2025, Gao \cite{Gao} proved that every sufficiently large natural number $n$ satisfying the necessary congruence condition can be written in the form
$$n = p_1^k + \cdots + p_s^k,$$
where $s > \max\left(16k\omega(k) + 4k + 3, C(k)\right)$ and $p_i \in A$ for all $i \in \{1, \ldots, s\}$. Here, $\omega(k)$ denotes the number of distinct prime divisors of
$k$ and
$$
C(k) =
\begin{cases}
\lfloor k^2 / 2 \rfloor & \text{if } k \neq 4, \\
7 & \text{if } k = 4.
\end{cases}
$$

For the cubic Waring–Goldbach problem, in 1965, Hua \cite{Hua1} proved that every sufficiently large odd integer can be expressed as the sum of nine cubes of primes. Reasonably, in this paper,
from the idea of the transference principle of Green \cite{green} and the strategy of Tan \cite{tan}, we will prove a density version of of Hua’s theorem.
\begin{theorem}\label{THM2}
For $i = 1, 2, \ldots ,9$, suppose $P_i$ are nine subsets of the prime set $\mathbb{P}$, with lower relative density $\underline{d}_{\mathbb{P}}(P_i) = \mu_i$, and suppose that $\sum_{i=1}^9 \mu_i^3 > 8$. Then, for any sufficiently large odd integer, there exist
$p_i \in P_i$ such that $n = \sum_{i=1}^9p_i^3$.
\end{theorem}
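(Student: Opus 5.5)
We prove Theorem \ref{THM2} with Green's transference principle, arranged as in Tan's treatment of five prime squares. Fix a small $\eta>0$ with $\sum_i\mu_i^3>8+10\eta$. We then use the $W$-trick: let $W=\prod_{p\le w}p$ for a slowly growing $w$, and pass to residue classes $b_i \bmod W$ with $(b_i,W)=1$. For each $i$ we define weighted functions on $\mathbb{Z}_N$, with $N\asymp n/W$, roughly
\[
f_i(x)=\frac{\phi(W)}{W}\sum_{\substack{p\in P_i,\ p\equiv b_i \ (W)\\ (p^3-c_i)/W=x}} 3p^2\log p
\]
for suitable $c_i \equiv b_i^3 \pmod W$. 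Each $f_i$ is majorized by a pseudorandom measure $\nu$ built from the cube-root prime weights, and the counting problem $n=\sum p_i^3$ becomes counting solutions of $x_1+\dots+x_9=m$ with $m\equiv (n-\sum c_i)/W$.

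\emph{Step 1 (local density count).} We must show that for each modulus $q$ coprime-to-$W$ structure (and at the primes $2,3,7$, where cubes are degenerate), the residues $b_i$ can be chosen so that $\sum b_i^3\equiv n$. The weighted mass of $P_i$ in those classes should be large enough that
\[
\sum_i \mathbb{E}f_i \text{ (in the chosen classes) } > 8+\eta,
\]
in the cubed-density sense. The hypothesis $\sum\mu_i^3>8$ enters exactly here. The cube appears because we must pick a good residue class modulo $W$ for each $i$, which amounts to averaging over classes. By a pigeonhole/Cauchy–Schwarz-type argument over the $\phi(W)$ classes, as in Tan's lemma for squares, we then need to locate classes whose densities $\alpha_i(b_i)$ satisfy $\sum\alpha_i(b_i)>8$ and $\sum b_i^3\equiv n \pmod W$. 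I would factor $W$ into prime powers and use the CRT. For each prime $p\le w$, I would prove a combinatorial lemma: if nine functions $a_i:\mathbb{Z}_{p^k}^*\to[0,1]$ have averages $\mu_i$ with $\sum\mu_i^3>8$, then some choice with $\sum x_i^3\equiv n$ has $\sum a_i(x_i)>8$. This lemma is the main obstacle. It needs the structure of cubes modulo $p$: for $p\equiv1\pmod 3$ the cubic residues form a subgroup of index $3$, and the cases $p=2,3,7,9$ need hand checks, where $9$ and $7$ are the tight ones and presumably explain the exponent $3$. I would first prove it for $p$ large by a Cauchy–Davenport-style additive combinatorics argument, and then dispatch the small moduli by direct counting.

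\emph{Step 2 (transference).} We verify that $\nu$ satisfies the restriction (Fourier) estimate $\sum_r|\widehat{f_i}(r)|^{s}\ll N^{s-1}$ for some $s<9$, say $s=8.5$. This comes from Hua's lemma for cubes ($\int|S|^8\ll N^{5+\varepsilon}$ in original variables) together with a Bourgain-type argument, and it exploits that nine exceeds the critical $2^3=8$. We also need the linear Fourier-uniformity $\|\widehat{\nu}-\widehat{1}\|_\infty=o(N)$, which comes from major/minor arc estimates for cubic exponential sums over primes in progressions (Vinogradov/Kumchev bounds). Then, via the dense model lemma, each $f_i$ is replaced by a bounded $g_i$ with $0\le g_i\le 1+o(1)$, $\mathbb{E}g_i=\mathbb{E}f_i$, and $\widehat{f_i}\approx\widehat{g_i}$ on large spectrum. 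Hence
\[
\sum_{x_1+\dots+x_9=m}\prod f_i(x_i)=\sum\prod g_i(x_i)+o(N^8).
\]

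\emph{Step 3 (bounded count).} For bounded $g_i$ on $\mathbb{Z}_N$ (restricting the $x_i$ to a middle range to avoid wraparound), we show that $\sum\mathbb{E}g_i>8+\eta$ forces $\gg N^8$ solutions. This is the nine-fold analogue of Li–Pan: by Step 1 the averages are large, and a simple counting argument gives it via iterated sumsets of the sets $\{g_i>\epsilon\}$, or by a Cauchy–Davenport/Pollard bound in $\mathbb{Z}_N$ for $N$ prime. Combining this with Step 2 and the prime-weight bound shows the number of representations is $\gg n^{2}(\log n)^{-9}\cdot(\text{positive})>0$, which completes the proof.
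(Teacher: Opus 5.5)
Your overall architecture matches the paper's: the $W$-trick with weights $3p^2\log p$, a restriction estimate at exponent $17/2$ via Chow/Hua, Green's transference, and the Li--Pan counting lemma. But Step 1 has a genuine gap, rooted in a misdiagnosis of where the cube comes from.

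\textbf{Where the cube comes from.} There is no Cauchy--Schwarz over residue classes; the cube is archimedean, inequality (\ref{4.1}). The hypothesis only controls $|P_i\cap[1,X]|$ at $X=(2n/9)^{1/3}$. In the worst case $P_i\cap[1,X]$ consists of the primes up to about $\mu_i X$, whose mass under the weight $3p^2\log p$ is about $\mu_i^3X^3$. Hence the weighted class densities $f_i(b)$ on $\mathbb{Z}_W^{(3)}$ have averages $\ge\alpha_i^3-6\kappa$, and $\sum\mu_i^3>8$ says exactly that these averages sum to more than $8$.

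\textbf{The local lemma you need.} What is needed is a \emph{linear, self-reproducing} statement: if the averages of $f_i:\mathbb{Z}_q^{(3)}\to[0,1]$ sum to $K$, then some admissible $(x_1,\dots,x_9)$ has $\sum f_i(x_i)\ge K$ (Theorem~\ref{THM1}). Only this form survives the CRT induction over the unboundedly many primes dividing $W$. One applies it at $p_1$ to the fibre averages $g_i$, then at $p_2$ to $b\mapsto f_i(x_i,b)$, whose averages are the numbers $g_i(x_i)$.

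Your lemma (hypothesis $\sum\mu_i^3>8$ on the averages, conclusion $\sum a_i(x_i)>8$) fails on both counts. The weighted densities are only known to have averages roughly $\ge\mu_i^3$, whose cubes need not sum past $8$. And the output $\sum g_i(x_i)>8$ at one prime does not supply the cubic input required at the next.

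Cauchy--Davenport also does not supply the key mechanism: it gives existence of solutions, not a solution beating the average. The paper gets this by showing that $(1,\dots,1)$ is a nonnegative combination of solution vectors $e_{i_1}+\dots+e_{i_9}$ (Lemma~\ref{lemma2}, Corollary~\ref{co2}, via lower bounds for $h(n)$, $h'(n)$ and hand checks for small $p$). It then averages over these weighted solutions and over $S_9$. The case $n\equiv 0$ uses invariance under multiplication by cubic residues.

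\textbf{The target ``$>8$ at every prime'' is false and unnecessary.} At $q=7$, where $\mathbb{Z}_7^{(3)}=\{1,6\}$, take $f_i(1)=7/9+\epsilon$ and $f_i(6)=1$ for all $i$, so the averages sum to $8+9\epsilon/2$. For $n\equiv1\pmod 7$, every admissible choice has exactly five $x_i=1$, so $\sum f_i(x_i)=71/9+5\epsilon<8$.

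Downstream one needs only $\sum f_i(b_i)>9/2$ with each $f_i(b_i)>0$. Restricting to primes $p\le(2n/9)^{1/3}$ puts $A_i\subseteq[0,\frac{2n}{9W}]$, roughly a $2/9$ fraction of $\mathbb{Z}_N$ for a prime $N$ slightly above $n/W$. Then $x_1+\dots+x_9\equiv n'$ in $\mathbb{Z}_N$ forces equality in $\mathbb{Z}$, and Lemma~\ref{lemma4.7} needs only $\sum\theta_i>1$. Accordingly the paper settles for $>67/9$ at $q=7$ in Corollary~\ref{3.1}.

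\textbf{Consequence for Step 3.} The same bookkeeping corrects your Step 3. The restriction must be to an initial segment of primes, since a lower-density hypothesis says nothing about a ``middle range''. The threshold there is $\sum\theta_i>1$, not $\sum\mathbb{E}g_i>8+\eta$.
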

\begin{xremark}
The result of Theorem \ref{THM2} is the best possible. Let $P_i = \mathbb{P} \setminus \{2\}$ for $i = 1, 2, \ldots, 8$, and $P_9 = \{2\}$. Then, it follows that $\mu_1 = \mu_2 = \cdots = \mu_8 = 1$, $\mu_9 = 0$, and $\sum_{i=1}^9 \mu_i^3 = 8$. For $p_i \in P_i$, $i = 1, 2, \ldots, 9$, we obtain $n = \sum_{i=1}^8 p_i^3 + 8$, which contradicts that $n$ is an odd number.
\end{xremark}
\begin{theorem}\label{THM3}
For $i = 1, 2, \ldots ,s$ and $s\geq 10$, suppose $P_i$ are $s$ subsets of the prime set $\mathbb{P}$, with lower relative density $\underline{d}_{\mathbb{P}}(P_i) = \mu'_i$, and suppose that $\sum_{i=1}^s {\mu'_i}^3 > s-1$. Then, for any sufficiently large odd integer, there exist
$p_i \in P_i$ such that $n = \sum_{i=1}^sp_i^3$.
\end{theorem}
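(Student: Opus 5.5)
The plan is to deduce Theorem \ref{THM3} from Theorem \ref{THM2} by induction on $s$, with base case $s=9$. The idea is to fix some of the primes to be small and absorb their cubes into $n$.

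Suppose $s\ge 10$ and $\sum_{i=1}^s {\mu'_i}^3>s-1$. Since each $\mu'_i\le 1$, there is an index, say $i=s$ after relabelling, with $\mu'_s>0$. Indeed, if every $\mu'_i=0$ then the sum would be $0\le s-1$. The remaining $s-1$ densities then satisfy
\[
\sum_{i=1}^{s-1}{\mu'_i}^3>s-1-{\mu'_s}^3\ge s-2,
\]
which is exactly the hypothesis for $s-1$ sets. The parity constraint must be handled carefully. We want to choose $p_s\in P_s$ with $p_s$ odd, so that $n-p_s^3$ is even. However, the smaller problem with $s-1\ge 9$ summands also concerns odd integers. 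So we need $n-p_s^3$ to be odd, which forces $p_s=2$. This forces us to modify the reduction.

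The better reduction removes two summands at once, or uses the fact that the hypothesis is strong. Because $\sum{\mu'_i}^3>s-1$, at most one $\mu'_i$ can be $\le 0$, and in fact every $\mu'_i>0$: if some $\mu'_j=0$, the other $s-1$ terms sum to at most $s-1$, a contradiction. So every set has positive density and contains infinitely many odd primes.

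For $s\ge 11$, pick two indices, say $s-1$ and $s$, fix odd primes $q_{s-1}\in P_{s-1}$ and $q_s\in P_s$, and set $m=n-q_{s-1}^3-q_s^3$. This $m$ is odd and sufficiently large, since $q_{s-1},q_s$ are fixed constants and $n$ is large. The remaining densities satisfy
\[
\sum_{i\le s-2}{\mu'_i}^3>s-1-2=(s-2)-1,
\]
so Theorem \ref{THM3} for $s-2\ge 9$ (or Theorem \ref{THM2} when $s-2=9$) applies and represents $m$. For $s=10$ this two-step reduction would leave $8$ summands, which is not available. Parity therefore makes a direct elementary reduction from $10$ to $9$ impossible, and the cases $s=10$ (and then inductively all even $s$) need the base argument itself.

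The main plan is therefore to rerun the proof of Theorem \ref{THM2} with $s$ summands instead of $9$. The steps are as follows.
\begin{enumerate}
\item Apply the W-trick and Green's transference principle to the weighted cube-prime functions for each $P_i$. This gives pseudorandom majorants and a restriction estimate of Hua type; for $s\ge 9$ this estimate is only easier, since Hua's inequality needs $2^3=8<s$ summands.
\item Reduce to a local problem: show that for every residue class modulo $W$ compatible with oddness, the densities $\mu'_i$ of the $P_i$ in reduced classes produce enough solutions of the congruence $\sum x_i^3\equiv n$.
\end{enumerate}

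The local combinatorial lemma is the crux, as in Tan's strategy. If $\sum_{i=1}^s \mu_i^3>s-1$, one needs a Cauchy--Davenport/Kneser-type bound on the sumset of cube residues mod $p^k$, uniform in $s$. The expected obstacle is showing that the extremal configuration in this lemma is only the parity one. The approach is to prove the mod-$2$ part as before, and to run the mod-$p$ part ($p$ odd) by induction using the fact that adding more sets with density close to $1$ only enlarges the sumset.
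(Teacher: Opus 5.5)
For odd $s$ your reduction is correct and complete, given Theorem \ref{THM2}. Every $\mu'_j>0$, because the other $s-1$ cubes contribute at most $s-1$. So you may fix odd primes $q_{s-1}\in P_{s-1}$ and $q_s\in P_s$ once and for all. Then $n-q_{s-1}^3-q_s^3$ is odd and large, and $\sum_{i\le s-2}{\mu'_i}^3>(s-2)-1$. Iterating down to nine summands proves the statement for every odd $s\ge 11$. This route is genuinely different from the paper's and more elementary. The paper only says the proof is ``similar to Theorem 1.1'', i.e.\ a rerun of Lemmas \ref{lemma1}--\ref{lemma2}, Corollary \ref{3.1} and the transference argument with $s$ summands. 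Your reduction needs none of that.

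Your plan fails in the even case, and it cannot be repaired, because for even $s$ the statement is false. Take $P_i=\mathbb{P}\setminus\{2\}$ for all $i$. Then $\mu'_i=1$ and $\sum_{i=1}^s{\mu'_i}^3=s>s-1$. But every sum of $s$ odd cubes is even, so no odd $n$ is represented. This is exactly the parity obstruction you noticed. Unlike the case $s=9$ in the Remark after Theorem \ref{THM2}, it is not an equality case: it satisfies the strict hypothesis.

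Concretely, your step 2, and equally the paper's omitted rerun, breaks at the factor $2$ of $W$. Every $b_i\in\mathbb{Z}_W^{(3)}$ is odd, so $\sum_{i=1}^s b_i\equiv s\equiv 0\not\equiv n\pmod 2$. Hence no admissible classes $b_1,\dots,b_s$ exist, whatever the $f_i$ are. (Corollary \ref{3.1} is stated for $(q,2)=1$ but is applied with $q=W$ even. For nine summands the mod-$2$ component is harmless; for an even number it is fatal.)

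So instead of planning to redo the machinery for $s=10$, you should have concluded that the statement holds exactly for odd $s$. The correct version for all $s\ge 9$ is: every sufficiently large $n\equiv s\pmod 2$ is a sum $\sum_{i=1}^s p_i^3$ with $p_i\in P_i$. It follows from Theorem \ref{THM2} by your own idea, peeling off one fixed odd prime $q\in P_s$ at a time. Each step works because $n-q^3\equiv s-1\pmod 2$ and $\sum_{i<s}{\mu'_i}^3>s-2$.
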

\begin{xremark}
The proof of this theorem is similar to Theorem 1.1, we will omit it.
\end{xremark}
\begin{theorem}\label{THM1}
Let $q$ be a square-free integer and $(q, 14) = 1$. Let $f_1, f_2, \ldots, f_9$, be nine real-valued functions on $\mathbb{Z}_q^{(3)}$. Then, for any $n \in \mathbb{Z}_q$, there exist $x_1, x_2, \ldots, x_9 \in \mathbb{Z}_q^{(3)}$ such that $n \equiv x_1 + x_2 + \cdots + x_9\pmod q$ in $\mathbb{Z}_q$ and that
$$\sum_{i=1}^9f_i(x_i)\geq\frac{1}{\phi_3(q)}\sum_{a\in \mathbb{Z}_q^{(3)}}\sum_{i=1}^9f_i(a),$$
where $\mathbb{Z}_q^{(3)}$ and $\phi_3(q)$ are defined in the following Notation.
\end{theorem}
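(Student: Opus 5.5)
The plan is a reduction to prime moduli, followed by a combinatorial argument for each prime $p$. Throughout I read $\mathbb{Z}_q^{(3)}$ as the set of cubes of units in $\mathbb{Z}_q$ and $\phi_3(q)=|\mathbb{Z}_q^{(3)}|$. By the Chinese remainder theorem, for coprime $q=q_1q_2$ we have $\mathbb{Z}_q^{(3)}\cong \mathbb{Z}_{q_1}^{(3)}\times\mathbb{Z}_{q_2}^{(3)}$, so $\phi_3$ is multiplicative. For a prime $p\equiv 2\pmod 3$ cubing is a bijection on units, so $\phi_3(p)=p-1$. For $p\equiv 1\pmod 3$, $\mathbb{Z}_p^{(3)}$ is the index-$3$ subgroup of $\mathbb{Z}_p^*$, so $\phi_3(p)=(p-1)/3$.

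\textbf{Induction on the number of prime factors.} Assume the theorem holds for coprime $q_1,q_2$, and write elements of $\mathbb{Z}_q^{(3)}$ as pairs $(a,b)$. Fix $n$. First apply the $q_2$ statement to auxiliary functions
$$h_i(b)=\frac{1}{\phi_3(q_1)}\sum_{a\in\mathbb{Z}_{q_1}^{(3)}} f_i(a,b).$$
This gives $y_1,\dots,y_9\in\mathbb{Z}_{q_2}^{(3)}$ with $\sum y_i\equiv n\pmod{q_2}$ and
$$\sum_i h_i(y_i)\ge \frac{1}{\phi_3(q_2)}\sum_b\sum_i h_i(b).$$
Next apply the $q_1$ statement to $a\mapsto f_i(a,y_i)$. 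This gives $x_i$ with $\sum x_i\equiv n\pmod{q_1}$ and $\sum_i f_i(x_i,y_i)\ge \sum_i h_i(y_i)$. Chaining the two inequalities and using $\phi_3(q)=\phi_3(q_1)\phi_3(q_2)$ yields the claim for $q$. So it suffices to treat a prime $p\ge 3$ with $p\ne 7$.

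\textbf{Prime case.} Let $k=\phi_3(p)$. For each $i$, order $\mathbb{Z}_p^{(3)}=\{a_{i,1},\dots,a_{i,k}\}$ so that $f_i(a_{i,1})\ge\cdots\ge f_i(a_{i,k})$, and set $A_i^{(j)}=\{a_{i,1},\dots,a_{i,j}\}$. Following the Li--Pan/Tan strategy, I aim for an additive covering statement of the following kind:
\begin{quote}
whenever $j_1+\cdots+j_9$ exceeds a suitable threshold, one has $A_1^{(j_1)}+\cdots+A_9^{(j_9)}=\mathbb{Z}_p$.
\end{quote}
Such a statement follows from Cauchy--Davenport for $p\equiv 2\pmod 3$. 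For $p\equiv1\pmod 3$ I would use Cauchy--Davenport together with the fact that $\mathbb{Z}_p^{(3)}$ is a multiplicative subgroup, which is not an arithmetic progression, so sums of its subsets grow well. A convenient form of the covering statement is that, for every $n$, some $(j_1,\dots,j_9)$ with $\sum_i j_i\le 9(k+1)/2$ satisfies $n\in\sum_i\{a_{i,j_i}\}$.

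Given this, a standard rearrangement/Abel-summation argument finishes the prime case. Write each $f_i$ as a nonnegative combination of indicator functions of the $A_i^{(j)}$ plus a constant. Since the right-hand side of the theorem is linear in the $f_i$, it suffices to treat the extremal configurations $f_i=\mathbf 1_{A_i^{(j_i)}}$. In that case the claim reads: if $\sum_i j_i/k$ exceeds the number of indices $i$ that can be made to land in $A_i^{(j_i)}$, a contradiction arises. Equivalently, for every integer $m$ with $\sum_i j_i>(m-1)k$, one can choose at least $m$ of the $x_i$ inside $A_i^{(j_i)}$ with the remaining $x_i$ free in $\mathbb{Z}_p^{(3)}$ and $\sum x_i\equiv n$. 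This is a sumset statement: the relevant set is
$$\sum_{i\in I}A_i^{(j_i)}+(9-|I|)\,\mathbb{Z}_p^{(3)},$$
and we need it to cover $\mathbb{Z}_p$ for a suitable $I$ with $|I|\ge m$. The free sum $(9-m)\,\mathbb{Z}_p^{(3)}$ is already very large, since sums of $s$ cubes cover $\mathbb{Z}_p$ for moderate $s$. Combining this with Cauchy--Davenport for the constrained part should give the covering.

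\textbf{Main obstacle.} I expect the hardest step to be this covering lemma for small primes $p\equiv1\pmod3$. There $\mathbb{Z}_p^{(3)}$ is tiny; for example $p=13$ gives $k=4$. The excluded prime $7$ shows where it breaks: there $\mathbb{Z}_7^{(3)}=\{\pm1\}$, and parity-type obstructions occur. For large $p$ I would use the Cauchy--Davenport bound $|A+B|\ge\min(p,|A|+|B|-1)$ directly. For the finitely many small primes, namely $p=13,19,31,37,\dots$ up to a bound where the general estimate takes over, I would check the needed covering statements by hand or by explicit computation of the cube sets and their iterated sumsets.
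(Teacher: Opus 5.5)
\textbf{Verdict: there is a genuine gap in the prime case.}

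Your CRT reduction to prime moduli is correct. It is the paper's argument with the roles of the two factors swapped.

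\textbf{Where the argument fails.} The gap is the step ``since the right-hand side of the theorem is linear in the $f_i$, it suffices to treat the extremal configurations $f_i=\mathbf 1_{A_i^{(j_i)}}$''. The left-hand side is $\max_{x}\sum_i f_i(x_i)$, taken over solutions $x\in(\mathbb{Z}_p^{(3)})^9$ of $x_1+\cdots+x_9\equiv n$. This is not linear in $(f_1,\dots,f_9)$; it is only subadditive.
\begin{itemize}
\item After you write $f_i=c_i+\sum_j\lambda_{i,j}\mathbf 1_{A_i^{(j)}}$, the solution that is good for one layer need not be good for another layer.
\item The inequality $\max_x\sum(\cdot)\le\sum\max_x(\cdot)$ points the wrong way for your purpose.
\item Equivalently, the $0/1$ case only checks a convex function at the vertices of $[0,1]^{9k}$. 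That says nothing about its values in the interior.
\end{itemize}
Your ``convenient form'' (some tuple with $\sum_i j_i\le 9(k+1)/2$) is weaker still. For $f_1=\mathbf 1_{\{a_{1,1}\}}$ and $f_2=\cdots=f_9=0$ one needs $j_1=1$, and that condition does not force it.

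Your own CRT step also needs the prime case for arbitrary real-valued functions. The functions $h_i$ and $a\mapsto f_i(a,y_i)$ are not indicators. So a $0/1$ statement could not drive the induction in any case.

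\textbf{The missing idea.} You need a single certificate that works for all $f$ at once. By LP duality, for fixed $p$ and $n$ the theorem is equivalent to the existence of a probability measure on the solution set whose nine coordinate marginals are all uniform on $\mathbb{Z}_p^{(3)}$. The solution set is $S_9$-invariant, so this amounts to writing $(1,\dots,1)$ as a nonnegative combination of the occupancy vectors $e_{i_1}+\cdots+e_{i_9}$.

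This is exactly what the paper proves:
\begin{itemize}
\item For $n\neq 0$ it uses Lemma~\ref{lemma2}, and Corollary~\ref{co2} for $p\equiv2\pmod 3$. The proof is a nearest-point argument on the cone $N'$. Cauchy--Davenport and the lower bounds for $h(n),h'(n)$ supply solutions through prescribed residues, and the small primes are checked by explicit computation.
\item For $n=0$ it averages a single solution over multiplication by the elements of $\mathbb{Z}_p^{(3)}$.
\end{itemize}
Pairing $\sum_i f_i$ with such a combination and summing over all permutations in $S_9$ then gives the inequality by genuine linearity.

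Your sumset and covering estimates could be useful inputs for building that fractional combination. As written, though, they target a $0/1$, one-solution-per-layer statement, and that statement does not imply the theorem.
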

\begin{xnotation}
Given a positive integer $q$, we write
$\mathbb{Z}_q=\mathbb{Z}/q\mathbb{Z} \ \text{and}\ \mathbb{Z}_q^{\ast}=\{b\in\mathbb{Z}_q:(b,q)=1\}.$
We write $\phi(q)=|\mathbb{Z}_q^\ast|$ to be the Euler totient function. Moreover, let $\mathbb{Z}_q^{(3)} $ denote the set of cubic residues in $\mathbb{Z}_q^\ast$ and $\phi_3(q)=|\mathbb{Z}_q^{(3)}|$. Let $e(x)=e^{2 \pi i x}$. For any $n \neq0 \in \mathbb{Z}_{p}$, let $h(n)$ be the number of solutions $(x, y)$ to the equation $x^{3}+y^{3} \equiv n\pmod p$, where $0\neq x, y\in \mathbb{Z}_p$. Similarly, for any $n \neq0 \in \mathbb{Z}_{p}$, let $h'(n)$ be the number of solutions $(x, y, z)$ to the equation $x^{3}+y^{3}+z^{3} \equiv n\pmod p$, where $0\neq x, y, z\in \mathbb{Z}_p$. The letter $p$, with or without subscript, is reserved for a prime number. Let $\epsilon$ be an arbitrarily small positive number and the value of $\epsilon$ may change from line to line. Let $S_k$ denote the symmetric group on $k$ elements.
\end{xnotation}

\section{Some auxiliary results}
In this section, we will give some results, which will play a key role in the proof of Theorem \ref{THM1}. Now, we will give some properties on $h(n)$ and $h'(n)$ to prove
Lemmas \ref{lemma1}-\ref{lemma2} and Corollaries \ref{co1}-\ref{co2}. Fix a positive integer $n$ such that $(n, p) = 1$, we have
\begin{align*} h(n) & =\sum_{\substack{0 < x, y \leq p-1 \\ x^{3}+y^{3}\equiv n\pmod p}} 1 \\ & =\frac{1}{p} \sum_{0 < x, y \leq p-1} \sum_{1 \leq r \leq p} e\left(\frac{r(x^{3}+y^{3}-n)}{p}\right) \\ & =\frac{1}{p} \sum_{1 \leq r \leq p} e\left(\frac{-r n}{p}\right) \sum_{0 < x, y \leq p-1} e\left(\frac{r x^{3}}{p}\right) e\left(\frac{r y^{3}}{p}\right) \\ & =\frac{1}{p} \sum_{1 \leq r \leq p} e\left(\frac{-r n}{p}\right)\left(\sum_{0 < x \leq p-1} e\left(\frac{r x^{3}}{p}\right)\right)^{2}. \end{align*}
Now we divide the summation over $r$ into three cases: $\left(\frac{r}{p}\right)_3=0$, $\left(\frac{r}{p}\right)_3=1$ and $\left(\frac{r}{p}\right)_3=-1$. Here and in the sequel,
$$\left(\frac{r}{p}\right)_3 = \begin{cases}
     1, &r\, \text{is the cubic residue modulo}\, p,\\
      -1, &r\, \text{is not the cubic residue modulo}\,p,\\
      0,& r=p.
    \end{cases}$$
Then, we have
\begin{align*}
 h(n)=&\frac{1}{p}\Biggl((p-1)^{2}+\sum_{\left(\frac{r}{p}\right)_3=1} e\left(\frac{-r n}{p}\right)\biggl(\sum_{0 < x \leq p-1} e\left(\frac{r x^{3}}{p}\right)\biggr)^{2}\\
 &+\sum_{\left(\frac{r}{p}\right)_3=-1} e\left(\frac{-r n}{p}\right)\biggl(\sum_{0 < x \leq p-1} e\left(\frac{r x^{3}}{p}\right)\biggr)^{2}\Biggr) \\ =&\frac{1}{p}\Biggl((p-1)^{2}+\sum_{\left(\frac{r}{p}\right)_3=1} e\left(\frac{-r n}{p}\right)\biggl(\sum_{0 < x \leq p-1} e\left(\frac{x^{3}}{p}\right)\biggr)^{2}\\
 &+\sum_{\left(\frac{r}{p}\right)_3=-1} e\left(\frac{-r n}{p}\right)\biggl(\sum_{0 < x \leq p-1} e\left(\frac{r' x^{3}}{p}\right)\biggr)^{2}\Biggr),
 \end{align*}
where $r'$ is a fixed non-cubic residue modulo $p$.

Since all primes greater than 3 have only two congruence classes modulo 3, i.e. $p\equiv1\pmod 3$ and $p\equiv2\pmod 3$, we will prove the following results by dividing into these
two cases respectively.

For $p\geq13$, $p\equiv 1\pmod 3$, by the inequality
$$\left|\sum_{x=0}^{p-1} e\left(\frac{r x^{3}}{p}\right)\right|\leq 2p^{\frac{1}{2}}$$
for $1 \leq r \leq p-1$, we have
$$\left|\sum_{\left(\frac{r}{p}\right)_3=1} e\left(\frac{r}{p}\right)\right| ,\quad\left|\sum_{\left(\frac{r}{p}\right)_3=-1} e\left(\frac{r}{p}\right)\right| \leq \frac{2p^\frac{1}{2}+2}{3}.$$
Finally, we have
\begin{align}\label{hn}h(n) \geq \frac{1}{p}\left((p-1)^{2}-\frac{4}{3}\left(2p^{\frac{1}{2}}+1\right)^2\left(p^{\frac{1}{2}}+1\right)\right).\end{align}
Similarly, we have
\begin{align}\label{hnn}h'(n) \geq \frac{1}{p}\left((p-1)^{3}-\frac{4}{3}\left(2p^{\frac{1}{2}}+1\right)^3\left(p^{\frac{1}{2}}+1\right)\right).\end{align}

Let $\{ x_1, x_2, \ldots, x_{\frac{p-1}{3}}\}$ be the set of cubic residues modulo $p$. If $x_{i_1} + x_{i_2} + \cdots + x_{i_9}  \equiv n \pmod p$, then we define a vector
$$v_{i_1, i_2, \ldots, i_9} = e_{i_1} + e_{i_2} + \cdots + e_{i_9}$$
in $\mathbb{R}^{\frac{p-1}{3}}$ to be the corresponding vector to $(x_{i_1}, x_{i_2}, \ldots, x_{i_9} )$,
where $e_k$ is the vector with $k-th$ coordinate 1 and
other coordinates 0. Let
$$N = \{v_{i_1, i_2, \ldots, i_9} : x_{i_1} + x_{i_2} + \cdots + x_{i_9}  \equiv n \pmod p\}$$
be the set of all corresponding vectors. For any vector $u$ in $\mathbb{R}^{\frac{p-1}{3}}$, we write $u_i$ for its $i-th$ component.
\begin{lemma}\label{lemma1}
The set $N$ spans the whole vector space $\mathbb{R}^{\frac{p-1}{3}}$.
\end{lemma}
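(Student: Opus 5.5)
The plan is to show that every standard basis vector $e_k$ lies in the span of $N$, using differences of vectors in $N$. The natural ingredient is the lower bound (\ref{hn}) for $h(n)$, together with (\ref{hnn}) for $h'(n)$. Throughout, we work in the case $p\equiv 1\pmod 3$ with $p$ large enough that these bounds are positive. For $p\equiv 2\pmod 3$ every nonzero element is a cube, and the argument below is only easier.

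\textbf{Step 1: $N$ is nonempty.} Using (\ref{hnn}), we fix a representation $n-x_{i_1}-\cdots-x_{i_6}\equiv y_1^3+y_2^3+y_3^3$ with all terms cubic residues. In fact any residue $m\neq 0$ is a sum of three nonzero cubes once $h'(m)>0$. This gives at least one vector $v\in N$.

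\textbf{Step 2: differences $e_j-e_k$ lie in $\mathrm{span}(N)$.} Fix a representation $n\equiv x_{a}+x_{b}+c$, where $c$ is a sum of seven cubic residues. Here we choose $c$ so that $n-c-x_a\neq0$ and $n-c-x_j\neq 0$ for the relevant indices. Because $x_a+x_b=m\neq 0$, the bound (\ref{hn}) shows that $m$ has many representations $m\equiv x_{j}+x_{l}$. Comparing the two vectors gives
\[
e_a+e_b-e_j-e_l\in\mathrm{span}(N).
\]
A cleaner route is to use three-term representations. Write $n\equiv x_k+R$, where $R$ is a sum of eight residues, so that $v=e_k+w\in N$. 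Next, replace $x_k$ together with two further summands $x_s,x_t$ of $R$ by another triple summing to the same value $x_k+x_s+x_t$, chosen via (\ref{hnn}) to contain $x_j$. Varying the choices and chaining them, we aim to show that $e_k-e_j$ lies in the span of $N$ for all $k,j$. Concretely, the key claim is the following: for fixed $s,t$ and any $k,j$ there exist $u,u'$ with
\[
x_k+x_s+x_t\equiv x_j+x_u+x_{u'},
\]
and, iterating, we then isolate $e_k-e_j$. The main obstacle is this isolation. Pairwise swaps produce only combinations like $e_k+e_s-e_j-e_u$, so we must cancel the auxiliary indices. The plan is to use the symmetry of multiplication by a cubic residue $\lambda$, which permutes the $x_i$: scaling the relation $x_k+x_s\equiv x_j+x_u$ by $\lambda$ gives related relations, and summing over orbits lets the auxiliary terms telescope. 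I expect this bookkeeping to be the hardest part.

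\textbf{Step 3: conclusion.} Once every $e_k-e_j$ is in the span, the span contains the hyperplane $\sum u_i=0$. Since $v\in N$ has coordinate sum $9\neq 0$, the span is all of $\mathbb{R}^{\frac{p-1}{3}}$. Small primes not covered by the inequalities (for example $p=7$ excluded by $(q,14)=1$, or $p\equiv1\pmod 3$ with $p<13$) would be handled by direct computation.
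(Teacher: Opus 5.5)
Your Step 3 reduction is sound, but Step 2, which carries the whole content of the lemma, is not proved. Your argument only produces balanced relations such as $e_a+e_b-e_j-e_l$ or $e_k+e_s+e_t-e_j-e_u-e_{u'}$ in $\mathrm{span}(N)$. The passage to $e_k-e_j$ is only announced, and the mechanism you propose for it fails. Index the coordinates by the cubic residues themselves. Multiplication by a cubic residue $\lambda$ permutes these indices, and the cubic residues form a group acting simply transitively on itself, so $\sum_{\lambda}e_{\lambda y}=\mathbf 1:=(1,\ldots,1)$ for every $y$. Summing a scaled relation $e_{\lambda a}+e_{\lambda b}-e_{\lambda c}-e_{\lambda d}$ over the orbit therefore gives $\mathbf 1+\mathbf 1-\mathbf 1-\mathbf 1=0$: everything cancels, not only the auxiliary indices. (Scaling also sends a tuple summing to $n$ to one summing to $\lambda n$, so only the homogeneous relations are invariant, not $N$.)

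The paper avoids Step 2 altogether by working with the orthogonal complement, phrased there as minimizing $\|w-u\|^2$ over $M=\mathrm{span}(N)$. Suppose $z\perp N$ and $z\ne 0$, and pick $j_0$ with $|z_{j_0}|$ maximal. By (\ref{hnn}) there is $w=6e_{j_0}+e_{j_7}+e_{j_8}+e_{j_9}\in N$, and then $|\langle z,w\rangle|\ge 6|z_{j_0}|-3|z_{j_0}|>0$, a contradiction. The degenerate cases are handled separately. If $n\equiv 6x_{j_0}$, the paper uses the second largest coordinate and a representation avoiding $j_0$. If $z$ is supported on one coordinate, any $w\in N$ using $x_{j_0}$ works.

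If you want to keep your route, complete Step 2 dually. A vector $z$ is orthogonal to all differences $v-v'$ with $v,v'\in N$ exactly when $g(y):=z_y$ has constant sum over $9$-tuples of cubic residues adding to $n$. For $p$ large, Cauchy--Davenport makes every class a sum of five (hence seven) cubic residues. So $g(a_1)+\cdots+g(a_4)$ depends only on $a_1+\cdots+a_4$, and likewise for two-term sums. Let $A$ be the set of cubic residues; then $A+A=\mathbb{Z}_p$ by (\ref{hn}) and $0=1+(-1)$. Hence $\tilde g(a+b):=g(a)+g(b)$ is well defined on $\mathbb{Z}_p$ and satisfies $\tilde g(m_1)+\tilde g(m_2)=\tilde g(m_1+m_2)+\tilde g(0)$. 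Thus $\tilde g-\tilde g(0)$ is a homomorphism $\mathbb{Z}_p\to\mathbb{R}$, hence zero. So $2g(a)=\tilde g(0)$ for all $a$, i.e.\ $z\in\mathbb{R}\mathbf 1$. That is exactly the statement that the differences span the hyperplane $\sum u_i=0$, after which your Step 3 finishes the proof.
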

\begin{proof}
Let $M$ be the space spanned by $N$. For a fixed vector $u \in \mathbb{R}^{\frac{p-1}{3}}$, define a function $S$ on $M$ by
$$S(w) = (w_1 - u_1)^2 + \cdots + (w_{\frac{p-1}{3}} - u_{\frac{p-1}{3}})^2.$$
Since $M$ is a closed subspace, there exists a point $v \in M$ such that $S(v) = \min_{w\in M} S(w)$. We shall now prove $S(v) = 0$, which is equivalent to proving $v=u$ and $u\in M$. Assuming the contrary that $S(v) > 0$. Let $A_{+} = \{i : v_i > u_i\}$, $A_{-} = \{i : v_i < u_i\}$ and $A_0 = \{i : v_i = u_i\}$. There are two cases:

Case 1: $|A_{+}| + |A_{-}| = 1$. Without loss of generality, we assume that $|A_{+}| = 1$, $|A_{-}| = 0$ and $A_{+} = \{j_0\}$. Then there
exist $j_6, j_7, j_8, j_9$ such that $5x_{j_0} + x_{j_6}+\cdots + x_{j_9} \equiv n \pmod p$, by the Cauchy-Davenport theorem from \cite[Theorem 5.4]{tao}. Thus, if $\epsilon$ is small enough, then we have
$$S(v - \epsilon v_{j_0, j_0, j_0, j_0, j_0, j_6, j_7, j_8, j_9}) = S(v) - 10\epsilon (v_{j_0} - u_{j_0} ) + 29\epsilon ^2 < S(v).$$
Since $v - \epsilon v_{j_0, j_0, j_0, j_0, j_0, j_6, j_7, j_8, j_9}\in M,$ this is a contradiction with the assumption that $S(v)$ is minimal. Therefore $S(v) = 0$.

Case 2: $|A_{+}|+|A_{-}| \geq 2$. Then we let $j_{0}, j_{1} \in A_{+} \cup A_{-}$ such that $|v_{j_{0}}-u_{j_{0}}|=\max _{i \in A_{+} \cup A_{-}}|v_{i}-u_{i}|$ and $|v_{j_{1}}-u_{j_{1}}|=\max _{i \in A_{+} \cup A_{-}\backslash \{j_{0}\}}|v_{i}-u_{i}|$.

(i)\ If $n \not\equiv 6 x_{j_{0}} \pmod p$, then there exist $j_{7}, j_{8}, j_{9}$ such that $6 x_{j_{0}}+x_{j_{7}}+x_{j_{8}}+x_{j_{9}} \equiv n\pmod p$, by (\ref{hnn}). Hence $v-\epsilon v_{j_{0}, \ldots, j_{0}, j_{7}, j_{8},  j_{9}} \in M$. Without loss of generality, we assume that $v_{j_{0}}>u_{j_{0}}$, then given the condition that $\epsilon$ is small enough, one gets
$$S(v-\epsilon v_{j_{0}, \ldots, j_{0}, j_{7}, j_{8},  j_{9}}) \leq S(v)-12 \epsilon(v_{j_{0}}-u_{j_{0}})+2 \epsilon(|v_{j_{7}}-u_{j_{7}}|+\cdots+|v_{j_{9}}-u_{j_{9}}|)+39 \epsilon^{2},$$
which is smaller than $S(v)$, leading to a contradiction to the assumption that $S(v)$ is minimal. Therefore $S(v) = 0$.

(ii)\ If $n \equiv 6 x_{j_{0}}\pmod p$, then $n \not\equiv 6 x_{j_{1}} \pmod p$. There exist $j_{7}, j_{8}, j_{9}$ such that $j_{7}, j_{8}, j_{9} \neq j_{0}$ and $6 x_{j_{1}}+x_{j_{7}}+x_{j_{8}}+x_{j_{9}} \equiv n\pmod p$, by (\ref{hnn}). Repeat the above process and we obtain the contradiction. Therefore $S(v) = 0$.

From the above argument, we have $u \in M$, $\mathbb{R}^{\frac{p-1}{3}}\subseteq M$. Hence, $M=\mathbb{R}^{\frac{p-1}{3}}$.
\end{proof}
\begin{lemma}\label{lemma2}
Define the set $N' = \{\sum_{\alpha} \delta_{\alpha}v_{\alpha}:\delta_{\alpha}\geq0,v_{\alpha}\in N\} \cap [0, 1]^{\frac{p-1}{3}}$, where $\{\sum_{\alpha} \delta_{\alpha}v_{\alpha}:\delta_{\alpha}\geq0,v_{\alpha}\in N\}$ is the set of all non-negative linear combinations of the vectors in $N$. Then $(1, 1, . . . , 1) \in N'$.
\end{lemma}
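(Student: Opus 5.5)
Since the cone generated by $N$ is closed under positive scaling, it suffices to show that $(1,1,\dots,1)$ lies in the cone $C=\{\sum_\alpha \delta_\alpha v_\alpha:\delta_\alpha\ge 0,\ v_\alpha\in N\}$. Then $(1,\dots,1)\in C\cap[0,1]^{\frac{p-1}{3}}=N'$ automatically. The cleanest route is symmetry plus averaging, not Lemma \ref{lemma1}. Let $G=\mathbb{Z}_p^{(3)}$, the group of nonzero cubic residues modulo $p$, of order $\frac{p-1}{3}$. I will use the action of $G$ on the residues $x_1,\dots,x_{\frac{p-1}{3}}$ by multiplication, combined with the freedom to pick solutions of $x_{i_1}+\cdots+x_{i_9}\equiv n$.

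The key steps are as follows.

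First, $N\neq\emptyset$: the congruence $x_{i_1}+\cdots+x_{i_9}\equiv n\pmod p$ has a solution in cubic residues. This follows from (\ref{hn}) and (\ref{hnn}) for $p\ge 13$, or from the Cauchy--Davenport argument already invoked in the proof of Lemma \ref{lemma1}. Small primes are handled directly or excluded by the hypotheses in force.

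Second, fix one solution $(y_1,\dots,y_9)$. Its vector $v=\sum_k e_{y_k}$ has nonnegative integer coordinates with total sum $9$. Averaging over $G$ directly does not preserve the congruence, since $g y_1+\cdots+g y_9\equiv gn\ne n$ in general. So instead, for each $g\in G$, consider solutions of $g^{-1}$-scaled problems. Concretely, I would use the fact that for every cubic residue $t$ there is a solution containing $t$ as one coordinate. Indeed, $t+x_{j_2}+\cdots+x_{j_9}\equiv n$ is solvable because eight cubes of nonzero residues cover every residue class: $h'$ is positive on nonzero classes by (\ref{hnn}), and sums of more cubes give everything by Cauchy--Davenport.

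Third, summing one such solution vector $w^{(t)}$ for each $t\in G$ gives $W=\sum_t w^{(t)}$, with $W_t\ge 1$ for all $t$. This alone does not give equal coordinates. To equalize, I would refine the construction: choose the remaining eight coordinates in a structured way, namely as a full orbit. For each $t$, the number of $9$-tuples of residues with first entry $t$ and sum $n$ is $\#\{(x_2,\dots,x_9)\in G^8:\sum x_k\equiv n-t\}$. Summing the indicator vectors of all tuples in $N$ (all ordered tuples) gives a vector $V$ with $V_t=9\cdot R(n-t)$, where $R(m)$ counts $8$-tuples in $G^8$ summing to $m$. The function $R(m)$ depends only on the coset class of $m$ in $\mathbb{Z}_p^*/G$ when $m\neq0$, so this vector is not constant. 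One fixes this with a reweighting: assign the tuple $(x_1,\dots,x_9)$ weight $\prod_k R(n-x_k)^{-1/9}$-type normalizations, or more simply pose a linear-programming feasibility problem.

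The main obstacle is this equalization. The cleanest tool I would use is Farkas' lemma. If $(1,\dots,1)\notin C$, there is $y\in\mathbb{R}^{\frac{p-1}{3}}$ with $\sum_t y_t<0$ but $\langle y,v\rangle\ge 0$ for all $v\in N$. Take $t_0$ minimizing $y_t$. Using many solutions that contain $t_0$ with multiplicity $9$, or multiplicity $6$ as in case (i) of Lemma \ref{lemma1}, would force $y_{t_0}\ge0$ if $9x_{t_0}\equiv n$ is possible. In general, I would combine solutions of the form $6x_{t_0}+x_a+x_b+x_c$ with the counting bound (\ref{hnn}). The bound shows that $(a,b,c)$ ranges over nearly all of $G^3$ nearly uniformly, giving $6y_{t_0}+3\bar y\ge -o(1)\cdot\max|y|$, where $\bar y$ is the average of $y$ and is negative. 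Making this quantitative, with the error from (\ref{hnn}) beating the gap, is the delicate point. The case $n\equiv 6x_{t_0}$ is handled via the second-smallest coordinate exactly as in case (ii) of Lemma \ref{lemma1}.
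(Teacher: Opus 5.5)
\textbf{There is a genuine gap: the decisive estimate is never proved, and the form you propose for it does not work.}

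Reducing to $(1,\dots,1)\in C$ and invoking Farkas' lemma is sound. This dual route differs from the paper's. The paper minimizes $S(u)=\sum_i(1-u_i)^2$ over the compact set $N'$. It then shows that any minimizer with a coordinate below $1$ can be improved by adding small multiples of solution vectors: Cauchy--Davenport handles a large deficient set, the counts $s,t>l/7$ from $h(n)$ handle a small one, and explicit tables handle small $p$.

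Your proposal stops exactly at the step that carries all the content. Averaging $6y_{t_0}+y_a+y_b+y_c\ge 0$ over the solutions of $x_a+x_b+x_c\equiv m:=n-6x_{t_0}$ gives $6y_{t_0}+3\sum_a w_a y_a\ge 0$. Here $w_a=r_2(m-x_a)/r_3(m)$, and $r_k(m)$ counts $k$-tuples from $\mathbb{Z}_p^{(3)}$ with sum $m$. An error term $o(1)\max|y|$ is of no use. Nothing in your argument bounds the positive part of $y$ better than $\max_t y_t<\sum_t y_t^-\le\frac{p-1}{3}|y_{t_0}|$. With $o(1)\asymp p^{-1/2}$, the error can therefore be of size $p^{1/2}|y_{t_0}|$, far exceeding the gap $6|y_{t_0}|$. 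The right bookkeeping is one-sided. Normalize $y_{t_0}=-1$; then $\sum_a w_a y_a=\sum_a w_a(y_a+1)-1<\frac{p-1}{3}\max_a w_a-1$. So the averaged inequality is contradictory as soon as $\max_a w_a\le\frac{9}{p-1}$. You need a pointwise upper bound on the weights, not near-uniformity.

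\textbf{The case you do not flag, and the one you do.} The pointwise bound does not follow from (\ref{hn})--(\ref{hnn}) when $m=x_{a^*}\in\mathbb{Z}_p^{(3)}$. Since $-1$ is a cube, $r_2(0)=\frac{p-1}{3}$, about three times the generic value. The condition $w_{a^*}\le\frac{9}{p-1}$ then becomes $r_3(m)\ge\frac{(p-1)^2}{27}$. This exceeds the main term $\frac{(p-1)^3}{27p}$ by about $p/27$, whereas (\ref{hnn}) only gives $r_3(m)\ge\frac{(p-1)^3}{27p}-O(p)$. The inequality is in fact true, but proving it needs an exact evaluation of $r_3$ on cubic residues via cubic Gauss periods. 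Conversely, the case $n\equiv 6x_{t_0}$ that you single out is harmless. There $m=0$, and every $w_a$ equals $\frac{3}{p-1}$ because $r_2$ is constant on $-\mathbb{Z}_p^{(3)}=\mathbb{Z}_p^{(3)}$. A second-smallest-coordinate device would not transfer to the dual anyway, since $y_{t_1}$ need not be negative.

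\textbf{A clean repair.} Average instead over all solutions whose first entry is $x_{t_0}$ and whose other eight entries are free. This gives $y_{t_0}+8\sum_a w_a y_a\ge 0$ with $w_a=r_7(n-x_{t_0}-x_a)/r_8(n-x_{t_0})$. The same one-sided argument yields a contradiction once $\max_a w_a\le\frac{27}{8(p-1)}$. For $r\not\equiv 0$, the sum $S(r)=\sum_{b\in\mathbb{Z}_p^{(3)}}e(rb/p)$ satisfies $|S(r)|\le\frac{2\sqrt p+1}{3}$. Hence $|r_k(m)-\frac{(p-1)^k}{3^k p}|\le(\frac{2\sqrt p+1}{3})^k$ for every $m$, including $m=0$, and this gives the required weight bound for $p\ge 31$. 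The primes $p=13,19$ would still need a direct check, as in the paper.
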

\begin{proof}
One can easily check that $N'$ is a compact set. Assuming that $(1,1, ..., 1) \notin N'$. We apply the same method as in Lemma \ref{lemma1} and define a function $S$ on $N'$ by $S(u)=(1-u_{1})^{2}+\cdots+(1-u_{\frac{p-1}{3}})^{2}$. Then there exists a point $v \in N'$ such that $S(v)$ attains its minimum on $N'$. Let $A_{0}=\{i: v_{i}=1\}$ and $A_{1}=\{i: v_{i}<1\}$. There are following two cases:

Case 1: $|A_{1}|=1$. Suppose that $A_{1}=\{i_{0}\}$. Next, we proceed by dividing this situation into the following two cases.

(i)\ If $6 x_{i_{0}} \not\equiv n \pmod p$, then there exists $j_{7}, j_{8}, j_{9}$ which are not all equal and $6 x_{i_{0}}+ x_{j_{7}} + x_{j_{8}}+x_{j_{9}} \equiv n\pmod p$, by (\ref{hnn}). Thus the vector $\frac{1}{1+2\epsilon}(v+\epsilon v_{i_{0}, \ldots, i_{0}, j_{7}, j_{8}, j_{9}})$ belongs to $N'$ and we have
\begin{align*} S\left(\frac{1}{1+2\epsilon}(v+\epsilon v_{i_{0}, \ldots, i_{0}, j_{7}, j_{8}, j_{9}})\right) & \leq \frac{p-1}{3}\left(\frac{2\epsilon}{1+2\epsilon}\right)^{2}+\left(1-\frac{v_{i_{0}}+6 \epsilon}{1+2\epsilon}\right)^{2} \\ & <\left(1-v_{i_{0}}\right)^{2}=S(v) \end{align*}
given the condition that $\epsilon$ is sufficiently small. This contradicts the minimality of $S(v)$. Then, we have $(1,1, ..., 1) \in N'$.

(ii)\ If $6 x_{i_{0}} \equiv n\pmod p$, then there exist $j_{6}, j_{7}, j_{8}, j_{9}$ which are not all equal and such that $5 x_{i_{0}}+x_{j_{6}}+x_{j_{7}}+x_{j_{8}}+x_{j_{9}} \equiv n \pmod p$, by the Cauchy-Davenport theorem from \cite[Theorem 5.4]{tao}. Then we have $\frac{1}{1+3 \epsilon}(v+\epsilon v_{i_{0}, \ldots, i_{0}, j_{6}, j_{7}, j_{8}, j_{9}}) \in N'$, and
\begin{align*} S\left(\frac{1}{1+3 \epsilon}(v+\epsilon v_{i_{0}, \ldots, i_{0}, j_{6}, j_{7}, j_{8}, j_{9}})\right) & \leq \frac{p-1}{3}\left(\frac{3 \epsilon}{1+3\epsilon}\right)^{2}+\left(1-\frac{v_{i_{0}}+5 \epsilon}{1+3 \epsilon}\right)^{2} \\ & <\left(1-v_{i_{0}}\right)^{2}=S(v) \end{align*}
for sufficiently small $\epsilon$, which contradicts our assumption. Then, we have $(1,1, ..., 1) \in N'$.

Case 2: $|A_{1}| \geq2$. Suppose $|A_{1}|=l\geq2$. Next, we proceed by dividing this situation into the following two cases.

(i)\ If $l\geq\frac{p+8}{9}$, by the Cauchy-Davenport theorem \cite[Theorem 5.4]{tao}, we have
\begin{align*}A_{1}+A_{1}+A_{1}+A_{1}+A_{1}+A_{1}+A_{1}+A_{1}+A_{1}=\mathbb{Z}_{p}.\end{align*}
Thus we can find $i_{1}, i_{2}, \ldots, i_{9} \in A_{1}$ such that $x_{i_{1}}+x_{i_{2}}+\cdots+x_{i_{9}} \equiv n\pmod p$ and the vector $v+\epsilon v_{i_{1}, i_{2},\ldots, i_{9}}$ belongs to $N'$ for sufficiently small $\epsilon$ and clearly $S(v+\epsilon v_{i_{1}, i_{2},\ldots, i_{9}})<S(v)$, for sufficiently small $\epsilon$, which contradicts our assumption.

(ii)\ We assume that $l<\frac{p+8}{9}$. Write $A_{1}=\{i_{1}, ..., i_{l}\}$, and assume that $1-v_{i_{1}} \geq\cdots \geq1-v_{i_{l}}>0$.

If $7 x_{i_{1}} \not\equiv n \pmod p$. Consider the solutions to the equation $x_{j_{1}}+x_{j_{2}} \equiv n-7 x_{i_{1}}\pmod p$ which satisfies $j_{1}<j_{2}$. Let $s$ be the number of these solutions. Then any $1 \leq j\leq \frac{p-1}{3}$ appears at most one time in these $s$ solutions. Thus, we get
$$v'=\frac{1}{1+\epsilon}\left(v+\sum_{\substack{j_{1}<j_{2} \\ x_{j_{1}}+x_{j_{2}} \equiv n-7 x_{i_{1}}\pmod p}} \epsilon v_{i_{1}, i_{1}, i_{1}, i_{1}, i_{1}, i_{1}, i_{1}, j_{1}, j_{2}}\right) \in N' .$$
Moreover, we have
\begin{align*} S(v') & \leq \frac{p-1}{3}\left(\frac{\epsilon}{1+\epsilon}\right)^{2}+\left(1-\frac{v_{i_{1}}+7 s \epsilon}{1+\epsilon}\right)^{2}+\left(1-\frac{v_{i_{2}}}{1+\epsilon}\right)^{2}+\cdots+\left(1-\frac{v_{i_{l}}}{1+\epsilon}\right)^{2} \\ & =\sum_{k=1}^{l}\left(\frac{1-v_{i_{k}}}{1+\epsilon}\right)^{2}-\frac{2(7 s-1) \epsilon(1-v_{i_{1}})}{(1+\epsilon)^{2}}+\sum_{k=2}^{l} \frac{2 \epsilon\left(1-v_{i_{k}}\right)}{(1+\epsilon)^{2}}+O\left(\epsilon^{2}\right) \\ & \leq \sum_{k=1}^{l}\left(1-v_{i_{k}}\right)^{2}-\frac{2 \epsilon}{(1+\epsilon)^{2}}\left((7 s-1)(1-v_{i_{1}})-\sum_{k=2}^{l}\left(1-v_{i_{k}}\right)\right)+O\left(\epsilon^{2}\right) \\ & \leq S(v)-\frac{2 \epsilon}{(1+\epsilon)^{2}}(7 s-l)\left(1-v_{i_{1}}\right)+O\left(\epsilon^{2}\right) . \end{align*}
If $7 s>l$, for sufficiently small $\epsilon$, we have $S(v')<S(v)$, leading to a contradiction. Then we have $(1,1, ..., 1) \in N'$.

On the other hand, if $7 x_{i_{1}} \equiv n \pmod p$, then $6 x_{i_{1}}+x_{i_{2}} \not\equiv n \pmod p$. Let $t$ be the number of solutions to $x_{j_{1}}+x_{j_{2}} \equiv n-6 x_{i_{1}}-x_{i_{2}} \pmod p$ that satisfies $j_{1}<j_{2}$. Again each $1 \leq j \leq\frac{p-1}{3}$ appears at most once in those $t$ solutions. Hence
$$v'=\frac{1}{1+\epsilon}\left(v+\sum_{\substack{j_{1}<j_{2} \\ x_{j_{1}}+x_{j_{2}} \equiv n-6 x_{i_{1}}-x_{i_{2}}\pmod p}} \epsilon v_{i_{1}, i_{1}, i_{1}, i_{1}, i_{1}, i_{1}, i_{2}, j_{1}, j_{2}}\right) \in N',$$
and we have
\begin{align*} S\left(v'\right) \leq & \frac{p-1}{3}\left(\frac{\epsilon}{1+\epsilon}\right)^{2}+\left(1-\frac{v_{i_{1}}+6 t \epsilon}{1+\epsilon}\right)^{2}+\left(1-\frac{v_{i_{2}}+t \epsilon}{1+\epsilon}\right)^{2}+\left(1-\frac{v_{i_{3}}}{1+\epsilon}\right)^{2}+\cdots\\
&+\left(1-\frac{v_{i_{l}}}{1+\epsilon}\right)^{2} \\
 =&\sum_{k=1}^{l}\left(\frac{1-v_{i_{k}}}{1+\epsilon}\right)^{2}-\frac{2(6 t-1) \epsilon\left(1-v_{i_{1}}\right)}{(1+\epsilon)^{2}}-\frac{2(t-1) \epsilon\left(1-v_{i_{2}}\right)}{(1+\epsilon)^{2}}+\sum_{k=3}^{l} \frac{2 \epsilon\left(1-v_{i_{k}}\right)}{(1+\epsilon)^{2}}\\
 &+O\left(\epsilon^{2}\right) \\
\leq &
\sum_{k=1}^{l}(1-v_{i_{k}})^{2}-\frac{2\epsilon}{(1+\epsilon)^{2}}\left((6 t-1)(1-v_{i_{1}})+(t-1) (1-v_{i_{2}})-\sum_{k=3}^{l} (1-v_{i_{k}})\right)\\
&+O(\epsilon^{2})\\
\leq &
S(v)-\frac{2 \epsilon}{(1+\epsilon)^{2}}(7 t-l)\left(1-v_{i_{2}}\right)+O(\epsilon^{2}).
\end{align*}
Similarly, if $7 t>l$, we have $S(v')<S(v)$ for sufficiently small $\epsilon$, which is a contradiction. Then we have $(1,1, ..., 1) \in N'$.

Next we only need to prove $s,t>\frac{l}{7}$. By our definition of $s$, $t$ and $h(n)$, we have $s, t \geq\frac{h(n)}{18}-1 \geq\frac{p+8}{63}>\frac{l}{7}$ for $p \geq139$.

Consider the equation $x_1^3 + x_2^3 + \dots + x_9^3 \equiv n \pmod p$. Multiplying both sides by a non-zero cubic residue $r^3$ leaves the number of solutions unchanged, that is, the equations corresponding to $n$ and $r^3n$ possess an identical number of solutions. Utilizing this property, we can partition the non-zero elements $1, 2, \ldots, p-1$ into distinct equivalence classes. Consequently, any target values $n$ belonging to the same class yield exactly the same number of solutions.

For $p=127$, one has that $l \leq14$ and $s, t \geq3$. In fact, for modulo $127$, the $126$ non-zero elements are partitioned into exactly $3$ distinct classes regarding cubic residues. For example, $3$ and $14$ belong to the same class, because $14\equiv47\times3\pmod{127}$, where 47 is a cubic residue. These 3 classes of sets are respectively:
\begin{align*}
B_1 &= \{1, 2, 4, 5, 8, 10, 16, 19, 20, 25, 27, 32, 33, 38, \\
&\qquad 40, 47, 50, 51, 54, 61, 63, 64, 66, 73, 76, 77, 80, 87, \\
&\qquad 89, 94, 95, 100, 102, 107, 108, 111, 117, 119, 122, 123, 125, 126\}, \\[2ex]
B_2 &= \{3, 6, 7, 12, 13, 14, 15, 23, 24, 26, 28, 30, 31, 35, \\
&\qquad 46, 48, 52, 56, 57, 60, 62, 65, 67, 70, 71, 75, 79, 81, \\
&\qquad 92, 96, 97, 99, 101, 103, 104, 112, 113, 114, 115, 120, 121, 124\}, \\[2ex]
B_3 &= \{9, 11, 17, 18, 21, 22, 29, 34, 36, 37, 39, 41, 42, 43, \\
&\qquad 44, 45, 49, 53, 55, 58, 59, 68, 69, 72, 74, 78, 82, 83, \\
&\qquad 84, 85, 86, 88, 90, 91, 93, 98, 105, 106, 109, 110, 116, 118\}.
\end{align*}
Therefore, to prove that the theorem holds for all 126 non-zero values of $n$, we do not need to check every single one, it suffices to select and verify the simplest number from each of the 3 classes mentioned above.
We choose $n=1$ represents the class of cubic residues, $n=3$ represents the first non-residue class and $n=9$ naturally falls into the second non-residue class. So we have $1 \equiv 2+126\equiv 5+123\equiv 20+108 \pmod {127}$, $3 \equiv 1+2\equiv 4+126\equiv 5+125 \pmod {127}$ and $9 \equiv 1+8\equiv 4+5\equiv 10+126 \pmod {127}$, where $1, 2, 4, 5, 8, 10, 16, 19, 20, 25, 27, 32, 33, 38, 40, 47, 50, 51, 54, 61, 63, 64, 66, 73, 76, 77,80, 87, 89,$\\ $94, 95, 100, 102, 107, 108, 111, 117, 119, 122, 123, 125, 126$ are cubic residues.

For $p=109$, one has that $l \leq12$ and $s, t \geq2$. In fact, we have $1 \equiv 2+108\equiv 46+64\equiv 33+77 \pmod {109}$, $3 \equiv 1+2\equiv 4+108\equiv 46+66 \pmod {109}$ and $9 \equiv 1+8\equiv 64+54\equiv 17+101 \pmod {109}$, where $1, 2, 4, 8, 16, 17, 19, 23, 27, 32, 33, 34, 38, 41, 43, 45, 46, 54, 55, 63, 64, 66, 68,$\\ $71, 75, 76, 77, 82, 86, 90, 92, 93, 101, 105, 107, 108$ are cubic residues.

For $p=103$, one has that $l \leq12$ and $s, t \geq2$. In fact, we have $1 \equiv 9+95\equiv 10+94\equiv 14+90 \pmod {103}$, $2 \equiv 10+95\equiv 24+81\equiv 39+66 \pmod {103}$ and $4 \equiv 1+3\equiv 14+93\equiv 27+80 \pmod {103}$, where $1, 3, 8, 9, 10, 13, 14, 22, 23, 24, 27, 30, 31, 34, 37, 39, 42, 61, 64, 66, 69, 72, 73,\\ 76, 79, 80, 81, 89, 90, 93, 94, 95, 100, 102$ are cubic residues.

Similarly, for $p=97$ one has $l \leq11$ and $s, t \geq2$. For $p=79$ we have $l \leq9$ and $s, t \geq2$. For $p=73$ we have $l \leq8$ and $s, t \geq2$. For $p=67$ we have $l \leq8$ and $s, t \geq2$. For $p=61$ we have $l \leq7$ and $s, t \geq2$. For $p=43$ we have $l \leq5$ and $s, t \geq1$. For $p=37$ we have $l \leq4$ and $s, t \geq1$. For $p=31$ we have $l \leq4$ and $s, t \geq1$. Thus $s, t>\frac{l}{7}$ for all $p \geq31, p\equiv1\pmod3$.

We remark that the above two lemmas also hold for $p=13,19$, which can be calculated directly and we list them below. We only need to list them for the there cases $n=1$, $n=2$ and $n=4$.

For $p=13$, $n=1$, set $(x_{1}, x_{2}, x_{3}, x_{4})=(1,5,8,12)$. Consider the vectors
$$(5, 0, 0, 4), (2, 6, 1, 0), (0, 2, 7, 0), (0, 0, 4, 5),$$
for $p=13$, $n=2$, consider the vectors
$$(0, 4, 0, 5), (4, 1, 4, 0), (5, 0, 3, 1), (1, 2, 4, 2),$$
and for $p=13$, $n=4$, consider the vectors
$$(7, 2, 0, 0), (0, 5, 1, 3), (2, 2, 4, 1), (0, 0, 0, 9).$$
The three sets of vectors listed above respectively form a basis for $\mathbb{R}^4$, and the coordinates of $(1, 1, 1, 1)$ with respect to these three bases are all positive.

For $p=19$, $n=1$, set $(x_{1}, x_{2}, x_{3}, x_{4}, x_{5}, x_{6})=(1,7,8,11,12,18)$. Apply the vectors
$$(5, 0, 0, 0, 0, 4), (1, 7, 1, 0, 0, 0), (2, 0, 7, 0, 0, 0), (1, 0, 0, 7, 0, 1), (1, 0, 0, 1, 7, 0), (1, 1, 0, 0, 0, 7),$$
for $p=19$, $n=2$, apply
$$(7, 2, 0, 0, 0, 0), (0, 6, 0, 0, 3, 0), (0, 0, 7, 2, 0, 0), (4, 0, 0, 5, 0, 0), (1, 0, 0, 0, 8, 0), (1, 0, 1, 0, 0, 7),$$
and for $p=19$, $n=4$, apply
$$(7, 0, 2, 0, 0, 0), (1, 7, 0, 1, 0, 0), (2, 0, 6, 1, 0, 0), (0, 0, 0, 9, 0, 0), (0, 1, 1, 0, 7, 0), (0, 0, 0, 0, 1, 8).$$
The three sets of vectors listed above respectively form a basis for $\mathbb{R}^6$, and the coordinates of $(1, 1, 1, 1, 1, 1)$ with respect to these three bases are all positive.
\end{proof}

For $p\geq5$, $p\equiv 2\pmod 3$,  by the inequality
$$\left|\sum_{x=0}^{p-1} e\left(\frac{r x^{3}}{p}\right)\right|=0$$
for $1 \leq r \leq p-1$, we have
$$\sum_{\left(\frac{r}{p}\right)_3=1} e\left(\frac{r}{p}\right)=\sum_{r=1}^{p-1} e\left(\frac{r}{p}\right)=-1\quad \text{and}\quad \sum_{\left(\frac{r}{p}\right)_3=-1} e\left(\frac{r}{p}\right)=0.$$
Finally, we have
\begin{align*}h(n)\geq p-2.\end{align*}
Similarly, we have
\begin{align*}h'(n)\geq \frac{1}{p}\left((p-1)^3-1\right).\end{align*}

Let $\{ x_1, x_2, \ldots, x_{p-1}\}$ be the set of cubic residues modulo $p$. If $x_{i_1} + x_{i_2} + \cdots + x_{i_9}  \equiv n \pmod p$, then we define a vector
$$v^{\ast}_{i_1, i_2, \ldots, i_9} = e_{i_1} + e_{i_2} + \cdots + e_{i_9}$$
in $\mathbb{R}^{p-1}$ to be the corresponding vector to $(x_{i_1}, x_{i_2}, \ldots, x_{i_9} )$,
where $e_k$ is the vector with $k-th$ coordinate 1 and
other coordinates 0. Let
$$N^{\ast} = \{v^{\ast}_{i_1, i_2, \ldots, i_9} : x_{i_1} + x_{i_2} + \cdots + x_{i_9}  \equiv n \pmod p\}$$
be the set of all corresponding vectors. For any vector $u$ in $\mathbb{R}^{p-1}$, we write $u_i$ for its $i-th$ component.

From the above two Lemmas, we can obtain the following two corollaries, whose proofs are similar to those of Lemma \ref{lemma1} and Lemma \ref{lemma2} respectively.
\begin{corollary}\label{co1}
The set $N^{\ast}$ spans the whole vector space $\mathbb{R}^{p-1}$.
\end{corollary}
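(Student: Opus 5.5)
We follow the proof of Lemma \ref{lemma1}, now in $\mathbb{R}^{p-1}$. The setting is simpler here: for $p\equiv 2\pmod 3$ the map $x\mapsto x^3$ is a bijection of $\mathbb{Z}_p^\ast$, so every nonzero residue is a cubic residue and $\{x_1,\ldots,x_{p-1}\}=\mathbb{Z}_p^\ast$. Fix $n$ and an arbitrary $u\in\mathbb{R}^{p-1}$, and let $M=\operatorname{span}N^\ast$. Choose $v\in M$ minimizing $S(w)=\sum_i (w_i-u_i)^2$ over $M$; this is possible because $M$ is a closed subspace. Suppose $S(v)>0$, and let $A_{\pm}$ and $A_0$ be defined as before. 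It suffices to find, for a suitable index $j_0$ with $|v_{j_0}-u_{j_0}|$ maximal, a vector $w\in N^\ast$ with $w_{j_0}$ large. Then $S(v\mp\epsilon w)<S(v)$ for small $\epsilon>0$, which contradicts minimality.

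\emph{Case 1: $|A_+|+|A_-|=1$, say $A_+=\{j_0\}$.} We need $j_6,\ldots,j_9$ with $5x_{j_0}+x_{j_6}+\cdots+x_{j_9}\equiv n\pmod p$. Since the summands range over all of $\mathbb{Z}_p^\ast$ and a sum of four elements of $\mathbb{Z}_p^\ast$ covers $\mathbb{Z}_p$ once $p\ge 5$ (Cauchy--Davenport, or directly), such $j$'s exist. The first-order change in $S$ is then $-10\epsilon(v_{j_0}-u_{j_0})$, exactly as in Lemma \ref{lemma1}.

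\emph{Case 2: $|A_+|+|A_-|\ge 2$.} Take $j_0,j_1$ as before. If $n\not\equiv 6x_{j_0}$, we need $j_7,j_8,j_9$ with $x_{j_7}+x_{j_8}+x_{j_9}\equiv n-6x_{j_0}\neq 0$. This is guaranteed by the bound $h'(m)\ge \frac{1}{p}((p-1)^3-1)>0$. The first-order change is at most $-12\epsilon|v_{j_0}-u_{j_0}|+2\epsilon\sum_{k=7}^{9}|v_{j_k}-u_{j_k}|$, which is $\le -6\epsilon|v_{j_0}-u_{j_0}|<0$ by maximality, even if some $j_k=j_0$. (In that case the coefficient of $j_0$ only gets larger and the same bound works.) If instead $n\equiv 6x_{j_0}$, then $n\not\equiv 6x_{j_1}$, because $x\mapsto 6x$ is injective for $p\ge 5$. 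We then run the same argument with $j_1$, choosing $j_7,j_8,j_9\ne j_0$. This requires that $h'(n-6x_{j_1})$ exceed the number of solutions that use the index $j_0$, which is at most $3(p-1)$. The bound $\frac{(p-1)^3-1}{p}>3(p-1)$ holds once $p\ge 7$. Every remaining $j_k$ then satisfies $|v_{j_k}-u_{j_k}|\le|v_{j_1}-u_{j_1}|$, so the same computation applies.

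The main obstacle is the small primes, especially $p=5$, where the counting margin in Case 2(ii) fails. There we would check directly, or observe that $\mathbb{Z}_5^\ast=\{1,2,3,4\}$ is small enough to exhibit, for each $n\in\mathbb{Z}_5$, four vectors of $N^\ast$ that form a basis of $\mathbb{R}^4$, in the same way as the explicit verifications for $p=13,19$. (The case $p=2$ is excluded because $(q,14)=1$, and $p=3$ does not satisfy $p\equiv 2\pmod 3$ in the relevant sense.) Once $S(v)=0$, we get $u\in M$ for every $u$, and hence $M=\mathbb{R}^{p-1}$.
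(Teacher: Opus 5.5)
Your argument is correct and follows the paper's route: the paper proves this corollary only by saying it is ``similar to Lemma~\ref{lemma1}'', and you carry out exactly that adaptation, using that cubing is a bijection of $\mathbb{Z}_p^\ast$ for $p\equiv 2\pmod 3$, and you also supply the count needed to avoid the index $j_0$ in Case~2(ii), which the paper leaves implicit. One small correction: the margin does not fail at $p=5$, since $\frac{(p-1)^3-1}{p}=\frac{63}{5}>12=3(p-1)$ (the exact count is $h'(m)=p^2-3p+3=13$), so your general argument already covers $p=5$ and the direct check you promise but do not carry out is unnecessary.
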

\begin{corollary}\label{co2}
Define the set $N'^{\ast}= \{\sum_{\alpha} \delta_{\alpha}v_{\alpha}:\delta_{\alpha}\geq0,v_{\alpha}\in N^{\ast}\} \cap [0, 1]^{p-1}$, where $\{\sum_{\alpha} \delta_{\alpha}v_{\alpha}:\delta_{\alpha}\geq0,v_{\alpha}\in N^{\ast}\}$ is the set of all non-negative linear combinations of the vectors in $N^{\ast}$. Then $(1, 1, . . . , 1) \in N'^{\ast}$.
\end{corollary}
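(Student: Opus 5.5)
The plan is to rerun the minimisation argument of Lemma \ref{lemma2} in the simpler setting $p\equiv 2\pmod 3$. Here every element of $\mathbb{Z}_p^\ast$ is a cubic residue, so $\{x_1,\dots,x_{p-1}\}=\mathbb{Z}_p^\ast$, and the lower bounds $h(n)\ge p-2$ and $h'(n)\ge ((p-1)^3-1)/p$ take the place of (\ref{hn}) and (\ref{hnn}). The same two bounds also hold for $n\equiv 0$. For instance $h(0)=p-1$, since $x\mapsto x^3$ is a bijection and so $x^3+y^3\equiv 0$ forces $y\equiv -x$. Hence $n$ may be arbitrary.

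The set $N'^{\ast}$ is compact, being the intersection of a finitely generated closed cone with a cube. So $S(u)=\sum_{i=1}^{p-1}(1-u_i)^2$ attains its minimum on it at some $v$. Suppose $S(v)>0$, and let $A_1=\{i: v_i<1\}$ with $l=|A_1|\ge 1$. In every case we will add $\epsilon$ times a suitable vector of $N^\ast$, possibly rescaling, and show that $S$ strictly decreases to first order in $\epsilon$.

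\emph{Case $l=1$, $A_1=\{i_0\}$.} If $6x_{i_0}\not\equiv n$, then $h'(n-6x_{i_0})>0$ gives $j_7,j_8,j_9$ with $6x_{i_0}+x_{j_7}+x_{j_8}+x_{j_9}\equiv n$. We may take them not all equal to $i_0$: the number of solutions far exceeds one when $p\ge 5$, and for $p=5$ it can be checked by hand. Rescaling $v+\epsilon v^\ast_{i_0,\dots,i_0,j_7,j_8,j_9}$ by $1/(1+2\epsilon)$ keeps it in $[0,1]^{p-1}$ and decreases $S$, with exactly the computation used in Lemma \ref{lemma2}. If $6x_{i_0}\equiv n$, we use five copies of $x_{i_0}$ together with four further residues instead. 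Their existence follows from Cauchy--Davenport: the fourfold sumset of $\mathbb{Z}_p^\ast$ is all of $\mathbb{Z}_p$ once $p\ge 5$.

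\emph{Case $l\ge 2$.} If $l\ge (p+8)/9$, Cauchy--Davenport gives $9A_1=\mathbb{Z}_p$. Adding $\epsilon v^\ast$ for a solution supported in $A_1$ then lowers $S$. This already disposes of $p=5$ and $p=11$, since there $(p+8)/9\le 2$. If $l<(p+8)/9$, order the deficits so that $1-v_{i_1}\ge\cdots\ge 1-v_{i_l}>0$. We then use the families
$$v'=\frac{1}{1+\epsilon}\Bigl(v+\epsilon\sum_{j_1<j_2}v^\ast_{i_1,\dots,i_1,j_1,j_2}\Bigr)$$
built from the solutions of $x_{j_1}+x_{j_2}\equiv n-7x_{i_1}$, or of $x_{j_1}+x_{j_2}\equiv n-6x_{i_1}-x_{i_2}$ when $7x_{i_1}\equiv n$. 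The estimates are verbatim those of Lemma \ref{lemma2}, and they reduce everything to checking $s,t>l/7$.

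This inequality is the main point to check. Each unordered pair $\{j_1,j_2\}$ with $j_1\ne j_2$ is counted twice in $h$, and at most two ordered solutions have $j_1=j_2$. Hence $s,t\ge (h-2)/2\ge (p-4)/2$. This exceeds $(p+8)/63>l/7$ for every $p\ge 5$, so no numerical verification of small primes is needed, in contrast with the case $p\equiv1\pmod 3$. The only delicate points are, first, that each index appears at most once among the pairs, which holds because $j_1$ determines $j_2$; and second, the degenerate situation in which the target residue $n-7x_{i_1}$ (respectively $n-6x_{i_1}-x_{i_2}$) is $0$. There $h(0)=p-1$ still gives $s\ge (p-1)/2$. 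In every case we reach a contradiction with minimality, so $S(v)=0$ and $(1,\dots,1)\in N'^{\ast}$.
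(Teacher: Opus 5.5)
Your argument is correct and follows the paper's route: the paper only says Corollary~\ref{co2} is proved like Lemma~\ref{lemma2}, and you rerun that minimisation argument, adding the observation that $s,t\ge (p-4)/2>l/7$ for every $p\ge 5$, so no small-prime checks are needed. There are a few minor slips to fix. \emph{First}, for $l=1$ you need $j_7,j_8,j_9$ not all equal, as in Lemma~\ref{lemma2}, rather than "not all equal to $i_0$": a triple $j_7=j_8=j_9=j\ne i_0$ would give $(1+3\epsilon)/(1+2\epsilon)>1$ in coordinate $j$ after rescaling. \emph{Second}, $p=11$ is not settled by the case $l\ge (p+8)/9$, since $19/9>2$, though your general bound covers it. \emph{Third}, $h'(0)=(p-1)(p-2)$ is actually below $((p-1)^3-1)/p$; this is harmless because $h'$ is only ever evaluated at nonzero targets.
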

\begin{xremark}
When $p=3$, it is easily verified that Corollary \ref{co1} and Corollary \ref{co2} hold.
\end{xremark}

\section{Proof of Theorem 1.3}
In this section, we will use Lemma \ref{lemma2} and Corollary \ref{co2} to prove Theorem \ref{THM1}. Then, we will give the Corollary \ref{3.1} by Theorem \ref{THM1}, to prove Theorem \ref{THM2}.

\noindent $\textit{Proof of Theorem \ref{THM1}}$\\
Let
$$K=\frac{1}{\phi_{3}(q)} \sum_{a \in \mathbb{Z}_{q}^{(3)}} \sum_{i=1}^{9} f_{i}(a).$$
Firstly we will prove Theorem \ref{THM1}, when $q = p \geq 11$ is a prime and $p \equiv 1\pmod 3$. Assuming the contrary that there exists $n \in \mathbb{Z}_{p}$ such that for any $x_{1}, x_{2}, \ldots, x_{9} \in \mathbb{Z}_{p}^{(3)}$ with $n\equiv x_{1}+x_{2}+\cdots+x_{9}\pmod {p}$, one has $\sum_{i=1}^{9} f_{i}(x_{i})<K$. Suppose that $\mathbb{Z}_{p}^{(3)}=\{y_{1}, ..., y_{\frac{p-1}{3}}\}$.

If $n=0$, then for any $1 \leq k_{1}, k_{2}, \ldots, k_{9} \leq \frac{p-1}{3}$ such that $y_{k_{1}}+y_{k_{2}}+\cdots+y_{k_{9}}\equiv0\pmod p$, we have
$$y_{j} y_{k_{1}}+y_{j} y_{k_{2}}+\cdots+y_{j} y_{k_{9}}\equiv0\pmod p$$
for all $j$. Hence we have
$$\sum_{j=1}^{\frac{p-1}{3}} \sum_{\sigma \in S_{9}} \sum_{i=1}^{9} f_{i}\left(y_{j} y_{k_{\sigma(i)}}\right)=9! \sum_{a \in \mathbb{Z}_{p}^{(3)}} \sum_{i=1}^{9} f_{i}(a)=\frac{9!}{3}(p-1)K.$$
However, by our assumption we have
$$\sum_{j=1}^{\frac{p-1}{3}} \sum_{\sigma \in S_{9}} \sum_{i=1}^{9} f_{i}\left(y_{j} y_{k_{\sigma(i)}}\right)<\frac{9!}{3}(p-1) K,$$
which is a contradiction.

For $n \neq0$, by Lemma \ref{lemma2}, we obtain a set $\{k_{i}^{\alpha}: 1\leq i \leq9\}$ such that $\sum_{i=1}^{9} y_{k_{i}^{\alpha}}\equiv n\pmod p$ for all $\alpha$ and $\delta_{\alpha} \geq0$ such that
$$\sum_{\alpha} \delta_{\alpha} \sum_{i=1}^{9} e_{k_{i}^{\alpha}}=\sum_{j=1}^{\frac{p-1}{3}} e_{j}.$$
Therefore, $\sum_{\alpha} \delta_{\alpha}=\frac{p-1}{27}$. Then, we have
\begin{align*}\sum_{\alpha} \delta_{\alpha} \sum_{\sigma \in S_{9}} \sum_{i=1}^{9} f_{i}\left(y_{k_{\sigma(i)}^{\alpha}}\right)&=8! \sum_{\alpha} \delta_{\alpha} \sum_{i=1}^9 \sum_{j=1}^9 f_i(y_{k_j^\alpha})\\
 &=8! \sum_{a \in \mathbb{Z}_p^{(3)}} \sum_{i=1}^9 f_i(a)\\
&=\frac{8!}{3}(p-1)K.\end{align*}
However, we have
$$\sum_{\alpha} \delta_{\alpha} \sum_{\sigma \in S_{9}} \sum_{i=1}^{9} f_{i}\left(y_{k_{\sigma(i)}^{\alpha}}\right)<\frac{8!}{3}(p-1) K$$
since $\sum_{i=1}^{9} y_{k_{\sigma(i)}^{\alpha}}\equiv n\pmod p$ for all $\alpha$ and $\sigma \in S_{9}$. This leads to a contradiction.

In case $p\geq5, p\equiv2\pmod3$ and in case $p=3$, by corollary \ref{co2}, the proof  is similar to that of in case $p\geq11$, $p\equiv1\pmod3$. Therefore, Theorem \ref{THM1} holds under conditions $p=3$, $p=5$ or $p\geq11$.

Next we will prove that Theorem \ref{THM1} holds for $q = p_1p_2$ with $p_{1}\neq p_{2}$. Let $p_{1}' \in \mathbb{Z}_{q}$ such that $p_{1} p_{1}' \equiv 1 \pmod
{p_{2}}$. Similarly let $p_{2}'$ be the reciprocal of $p_{2}$ modulo $p_{1}$. Then consider the homomorphism $\iota : \mathbb{Z}_{p_{1}} \oplus \mathbb{Z}_{p_{2}} \longrightarrow \mathbb{Z}_{q}$ defined by $\iota(x_{1}, x_{2})=x_{1} p_{2} p_{2}'+x_{2} p_{1} p_{1}'$. This is a group isomorphism, and since $\iota(x_{1}, x_{2}) \iota(y_{1}, y_{2})\equiv\iota(x_{1} y_{1}, x_{2} y_{2})\pmod q$, one can see $\iota$ is also an isomorphism between $\mathbb{Z}_{p_{1}}^{(3)} \oplus \mathbb{Z}_{p_{2}}^{(3)}$ and $\mathbb{Z}_{q}^{(3)}$ and so $\phi_{3}(q)=\phi_{3}(p_{1}) \phi_{3}(p_{2})$. We identify $f_{i}$, $i=1, 2, 3, \ldots, 9$, as functions on $\mathbb{Z}_{p_{1}}^{(3)} \oplus \mathbb{Z}_{p_{2}}^{(3)}$. For any $n \in \mathbb{Z}_{q}$, there exists $n_{1} \in \mathbb{Z}_{p_{1}}$, $n_{2} \in \mathbb{Z}_{p_{2}}$ such that $n=\iota(n_{1}, n_{2})$. Now we define $g_{i}$ on $\mathbb{Z}_{p_{1}}^{(3)}$ by
$$g_{i}(a)=\frac{1}{\phi_{3}\left(p_{2}\right)} \sum_{b \in \mathbb{Z}_{p_{2}}^{(3)}} f_{i}(a, b)$$
for $i=1, 2, 3, \ldots, 9$. Then by our assumption, there exist $x_{1}, x_{2}, \ldots, x_{9} \in \mathbb{Z}_{p_{1}}^{(3)}$ such that $n_{1}\equiv x_{1}+x_{2}+\cdots+x_{9}\pmod {p_1}$ and
$$\sum_{i=1}^{9} g_{i}\left(x_{i}\right) \geq \frac{1}{\phi_{3}\left(p_{1}\right)} \sum_{a \in \mathbb{Z}_{p_{1}}^{(3)}} \sum_{i=1}^{9} g_{i}(a)=K_1.$$
Thus, we have
$$\frac{1}{\phi_{3}\left(p_{2}\right)} \sum_{b \in \mathbb{Z}_{p_{2}}^{(3)}} \sum_{i=1}^{9} f_{i}\left(x_{i}, b\right) \geq K_1.$$
Now apply our hypothesis to the functions $f_{i}(x_{i}, b)$ on $\mathbb{Z}_{p_{2}}^{(3)}$, we obtain $y_{1}, y_{2}, \ldots, y_{9} \in \mathbb{Z}_{p_{2}}^{(3)}$ such that $n_{2}\equiv y_{1}+y_{2}+\cdots+y_{9}\pmod {p_2}$, and
\begin{align*}\sum_{i=1}^{9} f_{i}\left(x_{i}, y_{i}\right)& \geq \frac{1}{\phi_{3}\left(p_{2}\right)} \sum_{b \in \mathbb{Z}_{p_{2}}^{(3)}} \sum_{i=1}^{9} f_{i}\left(x_{i}, b\right) \\ &\geq K_1\\ &=
\frac{1}{\phi_{3}\left(p_{2}\right)}\frac{1}{\phi_{3}\left(p_{1}\right)} \sum_{b \in \mathbb{Z}_{p_{2}}^{(3)}}\sum_{a \in \mathbb{Z}_{p_{1}}^{(3)}} \sum_{i=1}^{9} f_{i}\left(a, b\right)\\ &=K.\end{align*}

Now, we can use the similar method to prove that Theorem \ref{THM1} holds for $q = p_1p_2\cdots p_s$ with $p_{1}, p_{2},\ldots,p_s$ are different primes.
Thus, we complete the proof of Theorem \ref{THM1}.
\begin{corollary}\label{3.1}
Let $q$ be a square-free integer with $(q, 2)=1$. Suppose that $f_{i}: \mathbb{Z}_{q}^{(3)} \longrightarrow [0,1]$, $1 \leq i \leq 9$, are nine functions satisfying that
\begin{align}\label{SS1}\sum_{a \in \mathbb{Z}_{q}^{(3)}} \sum_{i=1}^{9} f_{i}(a)>8 \phi_{3}(q).\end{align}
Then, for any $n \in \mathbb{Z}_{q},$ there exist $x_{1}, x_{2}, \ldots x_{9} \in \mathbb{Z}_{q}^{(3)}$ such that $n\equiv x_{1}+x_{2}+\cdots+x_{9}\pmod {q}$ and $\sum_{i=1}^{9} f_{i}(x_{i})>\frac{67}{9}$ and $f_{i}(x_{i})>0$, $1\leq i\leq9$.
\end{corollary}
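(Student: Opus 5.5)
The plan is to apply Theorem \ref{THM1} directly to the $f_i$ themselves and use the upper bound $f_i\le 1$ to get positivity. The constant $\frac{67}{9}$ is weaker than what the argument gives, so the only real work concerns the range of $q$.

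Fix $n\in\mathbb{Z}_q$. Theorem \ref{THM1}, applied to the real-valued functions $f_1,\dots,f_9$ on $\mathbb{Z}_q^{(3)}$, gives $x_1,\dots,x_9\in\mathbb{Z}_q^{(3)}$ with $n\equiv x_1+\cdots+x_9 \pmod q$ and
$$\sum_{i=1}^9 f_i(x_i)\ \ge\ \frac{1}{\phi_3(q)}\sum_{a\in\mathbb{Z}_q^{(3)}}\sum_{i=1}^9 f_i(a)\ >\ 8\ >\ \frac{67}{9},$$
where the strict inequality is hypothesis (\ref{SS1}). For positivity, suppose $f_{i_0}(x_{i_0})=0$ for some $i_0$. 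Since every $f_i$ takes values in $[0,1]$, we would get $\sum_i f_i(x_i)=\sum_{i\ne i_0} f_i(x_i)\le 8$, contradicting the strict bound above. Hence $f_i(x_i)>0$ for all $1\le i\le 9$, which is the conclusion.

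The step I expect to be the main obstacle is the hypothesis mismatch. Theorem \ref{THM1} is stated for square-free $q$ with $(q,14)=1$, whereas the corollary assumes only $(q,2)=1$. So when $7\mid q$ the argument above needs a version of Theorem \ref{THM1} for $q$ divisible by $7$. By the CRT reduction in the proof of Theorem \ref{THM1}, which builds the case of square-free $q$ from its prime factors, it suffices to establish the prime case $p=7$.

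For $p=7$ we have $\mathbb{Z}_7^{(3)}=\{1,6\}=\{\pm1\}$, and the analogue of Lemma \ref{lemma2} can be checked by hand. For each $n\not\equiv 0$ we need $(1,1)$ to be a nonnegative combination of vectors $(a,9-a)$ with $a-(9-a)=2a-9\equiv n\pmod 7$. The admissible values $a\in\{0,\dots,9\}$ form the classes of $a$ modulo $7$, so each class contains $a$ and possibly $a+7$. Since $a=4$ and $a=5$ lie in different classes, I would verify, class by class, that some admissible $a\le 4$ and some admissible $a\ge 5$ (or $a=4.5$ by averaging) occur, so that $(1,1)$ lies in the cone. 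The case $n=0$ is handled by the symmetrization argument $y_j\cdot(\cdot)$ used in the proof of Theorem \ref{THM1}, which works for any $p$.

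If some class fails this check (it seems a single class such as $n\equiv 2a-9$ with $a\in\{2,9\}$ does pass, but this must be checked for all six nonzero classes), I would instead restrict the corollary to $(q,14)=1$, which is all that the subsequent application to Theorem \ref{THM2} requires after the $W$-trick.
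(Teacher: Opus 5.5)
Your argument is correct when $7\nmid q$, which is the case the paper leaves implicit: Theorem~\ref{THM1} gives a representation with $\sum_i f_i(x_i)>8$, and $f_i\le 1$ then forces every $f_i(x_i)>0$. The gap is the case $7\mid q$. Your proposed repair cannot work there, because Theorem~\ref{THM1} is false at $p=7$.

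With $\mathbb{Z}_7^{(3)}=\{1,6\}=\{\pm1\}$, a representation using $a$ copies of $1$ has sum $2a-9$. So for $n\equiv 1,6,3,4 \pmod 7$ the only admissible value is $a=5,4,6,3$ respectively. A single $a\neq\frac92$ cannot put $(1,1)$ in the cone, so your check fails for four of the six nonzero classes; only $n\equiv 2,5$ pass.

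Concretely, take $n\equiv 3\pmod 7$, which forces six $1$'s and three $6$'s. Put $f_i(6)=1$ and $f_i(1)=c$ for all $i$, with $\frac79<c<1$. The hypothesis $9+9c>16$ holds, yet every representation gives $\sum_i f_i(x_i)=6c+3<\frac{9+9c}{2}$. As $c\to\frac79$ this tends to $\frac{69}{9}<8$. Letting $f_i(b,y)$ depend only on the $\mathbb{Z}_7$-component shows the same for every $q$ divisible by $7$. So the constant $\frac{67}{9}$ is not slack: it is exactly what absorbs the prime $7$, and no argument yielding ``$>8$'' can cover $7\mid q$.

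Your fallback does not help either. The corollary is applied with $q=W$, which is divisible by $7$ once $\omega\ge 7$. You cannot drop $7$ from $W$: the $x$ with $Wx+b_i$ a prime cube would then lie in only two residue classes mod $7$, and the majorant would no longer be pseudorandom.

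The missing idea, as in the paper, is to treat $p=7$ by hand with a weaker bound, then combine it with Theorem~\ref{THM1} via CRT in the right order.

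For $q=7$, write $F(b)=\sum_i f_i(b)\le 9$, so $F(1)+F(6)>16$. Averaging over the positions of the $1$'s gives, for $n\equiv 2$ with $a=2$,
$\frac29F(1)+\frac79F(6)=\frac79\bigl(F(1)+F(6)\bigr)-\frac59F(1)>\frac{67}{9}$.
The other classes give similar bounds ($\frac{71}{9}$, $\frac{69}{9}$), and $n\mapsto -n$ follows from the symmetry $1\leftrightarrow 6$. For $n\equiv 0$, your symmetrisation over $a\in\{1,8\}$ gives $>8$ and is the right treatment; the paper's one-line argument there, via $\sum_i f_i(1)$ or $\sum_i f_i(6)$, actually concerns $n\equiv\pm2$.

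Positivity no longer follows from the sum bound. Instead, the eighteen values $f_i(\pm1)$ lie in $[0,1]$ and sum to more than $16$, so at most one of them vanishes. You can choose the positions to avoid it, and the resulting sum is then even $>8$, since the nine unused values include the zero.

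For $q=7q'$, first apply Theorem~\ref{THM1} on $\mathbb{Z}_{q'}$ to $g_i(y)=\sum_{b\in\mathbb{Z}_7^{(3)}}f_i(b,y)$. This gives $y_i$ with $\sum_b\sum_i f_i(b,y_i)>16$. Only then apply the $q=7$ case to $b\mapsto f_i(b,y_i)$. Handling the $7$-component last is what delivers both $\sum_i f_i(x_i)>\frac{67}{9}$ and $f_i(x_i)>0$.
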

\begin{proof}
For $q=7$, the $7$ elements are partitioned into exactly 4 distinct classes regarding cubic residues. For example, $2$ and $5$ belong to the same class, because $5\equiv6\times2\pmod{7}$, where 6 is a cubic residue. These 4 classes of sets are respectively:
\begin{align*}
    D_1 &= \{0\}, \\
    D_2 &= \{1, 6\},\\
    D_3 &= \{2, 5\},\\
    D_4 &= \{3, 4\}, \\
\end{align*}
Therefore, to prove that the theorem holds for all 7 values of $n$, we do not need to check every single one, it suffices to select and verify the simplest number from each of the 7 classes mentioned above. We only need to check $n=0, 1, 2, 3$.

If $n=0$, then either $\sum_{i=1}^{9} f_{i}(1)>8$ or $\sum_{i=1}^{9} f_{i}(6)>8$ and we get the conclusion.

If $n=1$, then
\begin{align*}
\max_{\sigma \in S_9} \left\{ \sum_{i=1}^5 f_{\sigma(i)}(1) + \sum_{i=6}^9 f_{\sigma(i)}(6) \right\}
&\geq \frac{5}{9}[f_{\sigma(1)}(1)+\cdots+f_{\sigma(9)}(1)]+\frac{4}{9}[f_{\sigma(1)}(6)+\cdots+f_{\sigma(9)}(6)]\\
&>\frac{5}{9}\times8\times2 - \frac{1}{9}\times9 = \frac{71}{9}.
\end{align*}
We know that at most 1 entries in $\{f_i(a)|1<i<9,a\in\mathbb{Z}_{7}^{(3)}\}$ are zero by (\ref{SS1}). \\
If $f_1(1)=0$, then
$$\sum_{i=1}^4 f_{i}(6)+\sum_{i=5}^9 f_{i}(1)>16-\sum_{i=2}^4 f_{i}(1)-\sum_{i=5}^9 f_{i}(6)\geq8,$$
hence, $f_1(6), f_2(6), f_3(6), f_4(6), f_{5}(1), f_{6}(1), f_{7}(1), f_{8}(1), f_{9}(1)>0$.

If $n=2$, then
\begin{align*}
\max_{\sigma \in S_9} \left\{ \sum_{i=1}^2 f_{\sigma(i)}(1) + \sum_{i=3}^9 f_{\sigma(i)}(6) \right\}
> \frac{7}{9}\times8\times2 - \frac{5}{9}\times9 = \frac{67}{9}.
\end{align*}
If $f_1(1)=0$, then
$$\sum_{i=1}^7 f_{i}(6)+\sum_{i=8}^9 f_{i}(1)>16-\sum_{i=2}^7 f_{i}(1)+\sum_{i=8}^9 f_{i}(6)\geq8,$$
hence, $f_1(6), f_2(6), f_3(6), f_4(6), f_{5}(6), f_{6}(6), f_{7}(6), f_{8}(1), f_{9}(1)>0$.

If $n=3$, then
\begin{align*}
\max_{\sigma \in S_9} \left\{ \sum_{i=1}^6 f_{\sigma(i)}(1) + \sum_{i=7}^9 f_{\sigma(i)}(6) \right\}
> \frac{6}{9}\times8\times2 - \frac{3}{9}\times9 = \frac{69}{9}.
\end{align*}
The case $f_1(1) = 0$ is the same as above.

Now we consider the remaining case $q=7q'$, where $q'\in \mathbb{Z}_{\geq 2}$. Since $q$ be a square-free integer with $(q, 2)=1$, we have
$(q',7)=1$. We write $\mathbb{Z}_{q}=\mathbb{Z}_{7} \oplus \mathbb{Z}_{q'}$ as in Theorem \ref{THM1}. Suppose that $n=\iota(n_{1}, n_{2})$. We identify $f_{i}$, $i=1, 2, 3, \ldots, 9$, as functions on $\mathbb{Z}_{7}^{(3)} \oplus \mathbb{Z}_{q'}^{(3)}$. By the condition of Corollary \ref{3.1}, we have
$$\sum_{y \in \mathbb{Z}_{q'}^{(3)}} \sum_{b \in \mathbb{Z}_{7}^{(3)}} \sum_{i=1}^9 f_i(b, y) > 8 \times 2\phi_3(q') = 16\phi_3(q').$$
For $i=1, 2, 3, \ldots, 9$, define $g_{i}$ on $\mathbb{Z}_{q'}^{(3)}$ by
$$g_i(y) = \sum_{b \in \mathbb{Z}_{7}^{(3)}} f_i(b, y).$$
So we get$$\sum_{y \in \mathbb{Z}_{q'}^{(3)}} \sum_{i=1}^9 g_i(y) > 16\phi_3(q').$$
By Theorem \ref{THM1}, we get $y_{1}, y_{2}, y_{3}, y_{4}, y_{5}, y_{6}, y_{7}, y_{8}, y_{9} \in \mathbb{Z}_{q'}^{(3)}$ such that $n_{2}\equiv\sum_{i=1}^{9} y_{i}\pmod {q'}$ in $\mathbb{Z}_{q'}$, and
$$\sum_{i=1}^9 g_i(y_i) \ge \frac{1}{\phi_3(q')} \sum_{y \in \mathbb{Z}_{q'}^{(3)}} \sum_{i=1}^9 g_i(y).$$
Hence
$$\sum_{b \in\mathbb{Z}_{7}^{(3)}} \sum_{i=1}^{9}f_{i}\left(b, y_{i}\right)>16.$$
Repeat the above argument and we obtain $x_{1}, x_{2}, \ldots, x_{9} \in\mathbb{Z}_{7}^{(3)}$ such that $n_{1}\equiv\sum_{i=1}^{9} x_{i}\pmod {7}$ in $\mathbb{Z}_{7}$ and
$$\sum_{i=1}^{9} f_{i}\left(x_{i}, y_{i}\right)>\frac{67}{9}.$$
Moreover, we have $(x_{i}, y_{i}) \in \mathbb{Z}_{q}^{(3)}$ and $f_{i}(x_{i}, y_{i})>0$.
\end{proof}
\section{The transference principle}
In this section, we will reduce the problem of proving $n' \in A_1 + A_2 + \cdots +A_9$ to estimating the 9-fold convolution of the weighted indicator functions $a_i(x) = \mathds{1}_{A_i}(x)\lambda_i(x)$, by the transference principle of \cite{green}. Next we will estimate $\sum\limits_{\substack{x_{1}, x_{2}, \ldots, x_{9} \in \mathbb{Z}_{N} \\ x_{1}+x_{2}+\cdots+x_{9}=n'}} \prod_{i=1}^{9} a_{i}\left(x_{i}\right)$.

Let $\kappa=\frac{1}{10^{6}}(\sum_{i=1}^{9} \mu_{i}^{3}-8)$ and $\alpha_{i}=\frac{\mu_{i}}{1+2 \kappa}$. Let $n \equiv 1\pmod 2$ be a sufficiently large integer such that
$$\left|P_{i} \cap\left[1, \sqrt[3]{\frac{2}{9} n}\right]\right| \geq(1+\kappa) \alpha_{i} \frac{\sqrt[3]{\frac{2}{9} n}}{\frac{1}{3} \log n}.$$
Set $\omega=\omega_n=\frac{1}{100} \log \log n$ and
$$W=2\times \prod_{3 \leq p \leq \omega} p.$$
Assuming that $\delta_{1}, \delta_{2}$ are sufficiently small, we have
\begin{align}\label{4.1}\sum_{\substack{{x \leq \sqrt[3]{\frac{2}{9} n}}\\{(x,W)=1}}} 3 x^2 \log x \mathds{1}_{P_{i}}(x) & \geq \sum_{\omega<p \leq(1+\kappa) \alpha_{i} \sqrt[3]{\frac{2}{9} n}\left(1-\delta_{1}\right)} 3 p^2 \log p \notag \\ & \geq\left(1-\delta_{2}\right)\left(1-\delta_{1}\right)^{3}(1+\kappa)^{3} \alpha_{i}^{3} \frac{2}{9} n \geq \frac{2}{9} \alpha_{i}^{3} n,\end{align}
where we use the prime number theorem in the first inequality and the estimate
$$\sum_{A \leq p \leq B} 3 p^2 \log p=B^{3}+o\left(B^{3}\right)$$
as $B, \frac{B}{A} \longrightarrow \infty$ in the second inequality.
Now define
$$f_{i}(b)=\max \left\{0, \frac{9}{2} \frac{\phi_{3}(W)}{n} \sum_{\substack{x \leq \sqrt[3]{\frac{2}{9} n} \\ x^{3} \equiv b\pmod W}} 3 x^2 \log x \mathds{1}_{P_{i}}(x)-6 \kappa\right\}$$
for $b \in \mathbb{Z}_{W}^{(3)}$ and $1 \leq i \leq9$. Next, we will check that the $f_{i}(b)$ satisfies the condition of Corollary \ref{3.1}.
\begin{lemma}\label{lemma4.1}
Given the condition that $n$ is large enough, we have $f_{i}(b) \leq1$ for all $b \in \mathbb{Z}_{W}^{(3)}$ and $1 \leq i \leq9$.
\end{lemma}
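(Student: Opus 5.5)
We have to show that for every $b\in\mathbb{Z}_W^{(3)}$,
$$\frac{9}{2}\frac{\phi_3(W)}{n}\sum_{\substack{x\le X\\ x^3\equiv b\pmod W}}3x^2\log x\,\mathds 1_{P_i}(x)\le 1+6\kappa,$$
where $X=\sqrt[3]{\frac29 n}$. Once this holds, the maximum with $0$ is at most $1$. We may drop the indicator $\mathds 1_{P_i}$, since all terms are nonnegative, and bound the sum over all primes $p\le X$ with $p^3\equiv b \pmod W$. The prime powers and the primes dividing $W$ (at most $\omega$ of them) contribute $O(X^{2+1/2}\log X)$ plus bounded terms, which is negligible against $X^3/\phi_3(W)$.

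\textbf{Step 1: reduce to primes in residue classes.} The condition $p^3\equiv b\pmod W$ splits $p$ into residue classes $c\pmod W$ with $c^3\equiv b$. The map $c\mapsto c^3$ on $\mathbb{Z}_W^\ast$ has image $\mathbb{Z}_W^{(3)}$ of size $\phi_3(W)$, and each fibre has size exactly $\phi(W)/\phi_3(W)$, since it is a coset of the kernel of a group homomorphism. So the sum equals
$$\sum_{\substack{c\in\mathbb{Z}_W^\ast\\ c^3\equiv b}}\ \sum_{\substack{p\le X\\ p\equiv c\pmod W}}3p^2\log p .$$

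\textbf{Step 2: Siegel--Walfisz with partial summation.} Because $W\le e^{2\omega}\le(\log n)^{1/50}$, the modulus is far below any power of $\log X$. The Siegel--Walfisz theorem therefore gives, uniformly in $c$,
$$\sum_{p\le X,\ p\equiv c\ (W)}\log p=\frac{X}{\phi(W)}\bigl(1+o(1)\bigr).$$
Partial summation with weight $3t^2$ then yields $\sum_{p\le X,\,p\equiv c}3p^2\log p=\frac{X^3}{\phi(W)}(1+o(1))$. Summing over the $\phi(W)/\phi_3(W)$ classes $c$ gives a total of $\frac{X^3}{\phi_3(W)}(1+o(1))=\frac{2n}{9\phi_3(W)}(1+o(1))$.

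\textbf{Step 3: conclude.} Multiplying by $\frac92\frac{\phi_3(W)}{n}$ gives $1+o(1)$, which is at most $1+6\kappa$ once $n$ is large, because $\kappa>0$ is fixed. Hence $f_i(b)\le1$.

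The main obstacle is keeping the error uniform: the $o(1)$ has to be uniform over all residue classes and over the growing modulus $W=W_n$. Siegel--Walfisz delivers this, since its error term $X\exp(-c\sqrt{\log X})$ is uniform for moduli up to $(\log X)^A$, while $\phi(W)\le W=(\log n)^{o(1)}$. The fibre-size count for cube maps on $\mathbb{Z}_W^\ast$ is routine, and the factor $2$ in $W$ is harmless.
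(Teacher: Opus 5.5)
Your proposal is correct and follows essentially the same route as the paper: drop the indicator $\mathds 1_{P_i}$, split the condition $p^3\equiv b \pmod W$ into $\phi(W)/\phi_3(W)$ residue classes, and apply Siegel--Walfisz with partial summation to get $X^3/\phi(W)\,(1+o(1))$ per class, hence $f_i(b)\le 1+o(1)-6\kappa\le 1$. The only differences are cosmetic: the paper counts the classes via the Chinese Remainder Theorem rather than the kernel of the cubing map, and your remark about prime powers is unnecessary, since $x$ already ranges over primes and no prime dividing $W$ can satisfy $p^3\equiv b$ with $b$ a unit.
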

\begin{proof}
Write $X=\sqrt[3]{\frac{2}{9} n}$. Then we have
\begin{align*}\sum_{\substack{p \leq \sqrt[3]{\frac{2}{9} n} \\ p^{3} \equiv b\pmod W}} 3 p^2 \log p \mathds{1}_{P_{i}}(p) &\leq \sum_{\substack{p \leq \sqrt[3]{\frac{2}{9} n} \\ p^{3} \equiv b\pmod W}} 3 p^2 \log p\\
&=\sum_{\substack{z \in[1, W] \\ z^{3} \equiv b\pmod W}}\sum_{\substack{p\leq X \\ p \equiv z\pmod W}} 3 p^2\log p.\end{align*}
For any $m \leq X$, put
$$S_{m}=\sum_{\substack{p \leq m \\ p \equiv z\pmod W}} 1.$$
Since $(z, W)=1$ and $W \leq \log X$, for $c_1$ is a constant, we have $$S_{m}=\frac{Li(m)}{\phi(W)}+O(X e^{-c_{1} \sqrt{\log X}})$$ by the Siegel–Walfisz theorem.
Write $g(n)=3n^2 \log n$. Then we get
\begin{align*}\sum_{\substack{p \leq X \\ p \equiv z\pmod W}} 3 p^2 \log p & =\sum_{k=2}^{X}\left(S_{k}-S_{k-1}\right) g(k) \\ & =S_{X} g(X)+\sum_{k=2}^{X} S_{k-1}(g(k-1)-g(k)) . \end{align*}
Notice that $g(k-1)-g(k)=O(X\log X)$ and by the definition of $S_X$, we have $S_1=0$. It follows that
\begin{align*} &\phi(W) \sum_{\substack{p \leq X \\ p\equiv z\pmod W}} 3 p^2 \log p \\
& =Li(X) g(X)+\sum_{k=3}^{X} Li(k-1)(g(k-1)-g(k))+O\left(X^{3} e^{-c_{2} \sqrt{\log X}}\right) \\ & =\sum_{k=3}^{X} g(k) \int_{k-1}^{k} \frac{d x}{\log x}+O\left(X^{3} e^{-c_{2} \sqrt{\log X}}\right) \\ & =\sum_{k=3}^{X} 3 k^2 \int_{k-1}^{k} \frac{\log k}{\log x} d x+O\left(X^{3} e^{-c_{2} \sqrt{\log X}}\right) \\ & =\sum_{k=3}^{X} 3k^2 \int_{k-1}^{k} \frac{\log x+O\left(\frac{1}{k}\right)}{\log x} d x+O\left(X^{3} e^{-c_{2} \sqrt{\log X}}\right) \\ & =X^{3}+O\left(X^{3} e^{-c_{2} \sqrt{\log X}}\right)\end{align*}
where $c_{2}$ is an absolute constant smaller than $c_{1}$.

For each $b \in \mathbb{Z}_{W}^{(3)}$, we write $\sigma_{3}(b)=\#\{z \in[1, W]: z^{3} \equiv b \pmod W\}$. One can see that $\phi(W)=\phi_{3}(W) \sigma_{3}(b)$ applying the Chinese Remainder Theorem.
If $f_{i}(b)=0$, we finish the proof. If $f_{i}(b)\neq0$, we obtain
\begin{align*}f_{i}(b) & \leq \frac{9}{2} \frac{\phi_{3}(W)}{n} \frac{\sigma_{3}(b)}{\phi(W)} X^{3}(1+o(1))-6 \kappa \\ & =1+o(1)-6 \kappa \leq 1\end{align*}
given the condition that $n$ is sufficiently large.
\end{proof}
From the Lemma \ref{lemma4.1}, we find that $f_{i}: \mathbb{Z}_{q}^{(3)} \longrightarrow [0,1]$, $1 \leq i \leq 9$, which is the part condition of Corollary \ref{3.1}. Next, we will prove
$$\sum_{b \in \mathbb{Z}_{W}^{(3)}} \sum_{i=1}^{9} f_{i}(b)>8 \phi_{3}(q).$$
By (\ref{4.1}), we have
\begin{align*}\sum_{b \in \mathbb{Z}_{W}^{(3)}} \sum_{i=1}^{9} f_{i}(b) & \geq \frac{9}{2} \frac{\phi_{3}(W)}{n} \sum_{b \in \mathbb{Z}_{W}^{(3)}} \sum_{\substack{x \leq \sqrt[3]{\frac{2}{9}n} \\ x^{3} \equiv b\pmod W}} \sum_{i=1}^{9} 3 x^2 \log x \mathds{1}_{P_{i}}(x)-54 \kappa \phi_{3}(W) \\ & =\frac{9}{2} \frac{\phi_{3}(W)}{n} \sum_{\substack{x \leq \sqrt[3]{\frac{2}{9} n} \\ (x, W)=1}} \sum_{i=1}^{9} 3 x^2 \log x \mathds{1}_{P_{i}}(x)-54 \kappa \phi_{3}(W) \\ & \geq \frac{9}{2} \frac{\phi_{3}(W)}{n} \sum_{i=1}^{9} \frac{2}{9} \alpha_{i}^{3} n-54 \kappa \phi_{3}(W) \\ & >8 \phi_{3}(W).
\end{align*}

Now, $f_i$ satisfies all the conditions of Corollary \ref{3.1}. Thus, by Corollary \ref{3.1}, we obtain $b_{1}, b_{2}, \ldots, b_{9} \in \mathbb{Z}_{W}^{(3)}$ such that $n \equiv b_{1}+b_{2}+\cdots+b_{9} \pmod W$. Moreover, we have $\sum_{i=1}^{9} f_{i}(b_{i})>\frac{67}{9}$ and $f_{i}(b_{i})>0$ for all $i$.

Assuming that $1 \leq b_{i}<W$. We choose $N$ to be a prime in $[\frac{(1+\kappa) n}{W}, \frac{(1+2 \kappa) n}{W}]$, whose existence is ensured by the prime number theorem, and let $n'=\frac{n-(\sum_{i=1}^{9} b_{i})}{W}$. Set $A_{i}=\{x: W x+b_{i}=p^{3}, p \in P_{i} \cap[1, \sqrt[3]{\frac{2}{9} n}]\}$. Then it suffices to show that $n' \in A_{1}+A_{2}+\cdots+A_{9}$.
Now we define
\begin{align*}\lambda_{i}(x)= \begin{cases}\phi_{3}(W) \frac{(W x+b_{i})^{\frac{2}{3}} \log \left(W x+b_{i}\right)}{W N}, & x \leq N,\ W x+b_{i}=p^{3} \ \text{for some}\ p \in \mathbb{P}, \\ 0, & \text{otherwise}. \end{cases}
\end{align*}
Set $\alpha_{i}'=\sum_{x} \mathds{1}_{A_{i}}(x) \lambda_{i}(x)$. Then
$$\alpha_{i}'=\sum_{\substack{p^{3} \equiv b_{i}\pmod W \\ p \leq \sqrt[3]{\frac{2}{9} n}}} \mathds{1}_{P_{i}}(p) \phi_{3}(W) \frac{3 p^2 \log p}{W N} \geq \frac{2}{9} \frac{f_{i}\left(b_{i}\right)+6 \kappa}{1+2 \kappa} \geq \kappa$$
and
\begin{align*}\sum_{i=1}^{9} \alpha_{i}' & \geq \frac{2}{9} \frac{1}{1+2 \kappa}\left(\sum_{i=1}^{9} f_{i}\left(b_{i}\right)+54 \kappa\right) \\ & \geq \frac{1}{1+2 \kappa}\left(\frac{134}{81}+12 \kappa\right) \\ & \geq \frac{134}{81}+7 \kappa.\end{align*}
From now on, we consider the sets $A_{i}$, $1 \leq i \leq 9$, as subsets of $\mathbb{Z}_{N}$. Since $A_{i} \subseteq[0, \frac{2 n}{9 W}]$ and let $N \geq\frac{n}{W}+13$, we have that $n' \in A_{1}+A_{2}+\cdots+A_{9} \in \mathbb{Z}_{N}$ is equivalent to $n' \in A_{1}+A_{2}+\cdots+A_{9} \in \mathbb{Z} $. Define $a_{i}(x)=\mathds{1}_{A_{i}}(x) \lambda_{i}(x)$.
For any function $f$ defined on $\mathbb{Z}_{N}$, define $\tilde{f}$ on $\mathbb{Z}_{N}$ by
$$\tilde{f}(r)=\sum_{x \in \mathbb{Z}_{N}} f(x) e\left(-\frac{x r}{N}\right),$$
and for functions $f, g$ on $\mathbb{Z}_{N}$, let
$$f \ast g(x)=\sum_{y \in \mathbb{Z}_{N}} f(y) g(x-y).$$
Next, we choose that $\epsilon=\frac{1}{\log \omega}$ and $\delta=\frac{1}{\log \log \omega}$. For each $i$, let $R_{i}=\{r \in \mathbb{Z}_{N}:|\tilde{a}_{i}(r)| \geq\delta\}$ and $B_{i}=\{x \in \mathbb{Z}_{N}:\parallel\frac{x r}{N}\parallel \leq\epsilon \ for\ all\ r \in R_{i}\}$, where $\parallel x\parallel=\min _{z \in \mathbb{Z}}|x-z|$. Moreover, let $\beta_{i}=\frac{\mathds{1}_{B_{i}}}{|B_{i}|}$ and $a_{i}'=a_{i} \ast\beta_{i} \ast \beta_{i}$.

The quartic analogue of \cite[Lemma 4.3]{tan} follows by the same argument, using \cite[Lemma 5.1]{Chow} with $d=3$ and $t=4$, as justified by the discussion following \cite[(1.11)]{Chow}. Thus, for every $q>8$ and every function $g_i:\mathbb{Z}_N\to\mathbb{C}$ satisfying $|g_i|\leq\lambda_i$, we have
\begin{align}\label{xin}\sum_{r\in\mathbb{Z}_N}|\widetilde{g}_i(r)|^q \ll_q 1. \end{align}
In the following argument, we take $q=17/2$.
\begin{lemma}\label{lemma4.5}
For some constant $C_1$ we have
$$\left|\sum_{\substack{x_{1}, x_{2},\ldots , x_{9} \in \mathbb{Z}_{N} \\ x_{1}+x_{2}+\cdots+x_{9}=n'}} \prod_{i=1}^{9} a_{i}'\left(x_{i}\right)-\sum_{\substack{x_{1}, x_{2}, \ldots, x_{9} \in \mathbb{Z}_{N} \\ x_{1}+x_{2}+\cdots+x_{9}=n'}} \prod_{i=1}^{9} a_{i}\left(x_{i}\right)\right| \leq \frac{C_{1}}{N}\left(\epsilon^{2} \delta^{-\frac{17}{2}}+\delta^{\frac{1}{18}}\right).$$
\end{lemma}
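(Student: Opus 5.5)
We work on the Fourier side throughout. By orthogonality on $\mathbb{Z}_N$,
$$\sum_{x_1+\cdots+x_9=n'}\prod_{i=1}^9 g_i(x_i)=\frac1N\sum_{r\in\mathbb{Z}_N}\prod_{i=1}^9\tilde g_i(r)\,e\Big(\frac{n'r}{N}\Big).$$
Since $\tilde a_i'=\tilde a_i\tilde\beta_i^2$, we telescope:
$$\prod_i\tilde a_i'-\prod_i\tilde a_i=\sum_{j=1}^9\Big(\prod_{i<j}\tilde a_i\Big)\big(\tilde a_j'-\tilde a_j\big)\Big(\prod_{i>j}\tilde a_i'\Big).$$
This bounds the difference in the statement by $\frac1N\sum_{j}\sum_r|\tilde a_j(r)|\,|1-\tilde\beta_j(r)^2|\prod_{i\ne j}|c_i(r)|$, where each $c_i$ is either $\tilde a_i$ or $\tilde a_i'$. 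The inputs we need are: $|\tilde\beta_i|\le1$, so $|\tilde a_i'|\le|\tilde a_i|$, and $|a_i|\le\lambda_i$, so the restriction estimate (\ref{xin}) with $q=17/2$ applies to every factor.

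For each $j$ we split $\mathbb{Z}_N$ into $R_j$ and its complement.

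\emph{On $R_j$.} For $x\in B_j$ we have $|1-e(-xr/N)|\le2\pi\epsilon$, so $|1-\tilde\beta_j(r)|\ll\epsilon$ and hence $|1-\tilde\beta_j^2|\ll\epsilon$. I want the stronger bound $\epsilon^2$, which comes from symmetry. The set $B_j$ is symmetric ($x\in B_j\iff -x\in B_j$), so $\tilde\beta_j(r)$ is real and
$$1-\tilde\beta_j(r)=\frac{1}{|B_j|}\sum_{x\in B_j}\big(1-\cos(2\pi xr/N)\big)\ll\epsilon^2.$$
With $|\tilde\beta_j|\le 1$ this gives $|1-\tilde\beta_j^2|\ll\epsilon^2$. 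Using $|\tilde a_i|\le\sum_x a_i(x)\ll1$ (from the $\lambda_i$ normalisation) and $|R_j|\le\delta^{-17/2}\sum_r|\tilde a_j|^{17/2}\ll\delta^{-17/2}$ by (\ref{xin}), the contribution of $R_j$ is $\ll\frac{1}{N}\epsilon^2\delta^{-17/2}$.

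\emph{Off $R_j$.} Here $|\tilde a_j(r)|<\delta$ and $|1-\tilde\beta_j^2|\le2$. We borrow a small power of $\delta$ from the $j$-th factor, writing $|\tilde a_j|\le\delta^{1/18}|\tilde a_j|^{17/18}$. The remaining factors then have total exponent $\frac{17}{18}+8$. Since the paper chose $q=17/2$, I apply Hölder with nine exponents $p_i$ satisfying $\sum 1/p_i=1$ and each exponent used on a $\tilde g$ being at least $17/2>8$, so that (\ref{xin}) bounds every resulting $\ell^{p}$-sum by $O(1)$. Concretely, $\sum_r|\tilde a_j|^{17/18}\prod_{i\ne j}|c_i|$ should be handled with exponent $p=\frac{17}{2}\cdot\frac{18}{17}=9$ on $|\tilde a_j|^{17/18}$ and exponent $9$ on each of the eight others. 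The complication is that $\sum_i 1/p_i$ must equal $1$ while each power stays $\ge 17/2$. A cleaner route is to use $\|\tilde c\|_\infty\ll 1$ to lower the remaining exponents, or to choose exponents $(17/2,\dots)$ after peeling off a sup-norm factor. Either way the contribution is $\ll\frac1N\delta^{1/18}$.

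Summing over $j\le9$ and the two ranges gives the bound with a suitable constant $C_1$. I expect the main obstacle to be the Hölder bookkeeping in the off-$R_j$ range: the exponents must be arranged so that every factor falls under (\ref{xin}) with $q>8$ while a positive power $\delta^{1/18}$ is still extracted. If that exact arrangement fails, I would fall back to bounding $\sup|\tilde a_j|^{1/18}$ times an $\ell^{17/2}$-type Hölder on the rest.
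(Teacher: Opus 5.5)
Your proposal is correct and is essentially the paper's argument. The paper defers to Tan's (4.5), which is Green's method: telescope the Fourier product, get the $\epsilon^2$ bound on $R_j$ from the symmetry of $B_j$, and apply H\"older off $R_j$. The ``complication'' you flag does not exist: exponent $9$ on all nine factors gives $\sum_i 1/p_i=1$, with resulting powers $17/2$ on $\tilde a_j$ and $9$ on the others, both $>8$, so
$\sum_{r\notin R_j}|\tilde a_j|\prod_{i\ne j}|c_i|\le\delta^{1/18}\bigl(\sum_r|\tilde a_j|^{17/2}\bigr)^{1/9}\prod_{i\ne j}\bigl(\sum_r|c_i|^{9}\bigr)^{1/9}\ll\delta^{1/18}$. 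These are exactly the substitutions $\frac{1}{10}\to\frac{1}{18}$, $\frac{9}{10}\to\frac{17}{18}$, $\frac15\to\frac19$ the paper makes, so no fallback is needed.
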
\begin{proof}
We can get the conclusion by replacing $\frac{1}{10}$ with $\frac{1}{18}$, replacing $\frac{9}{10}$ with $\frac{17}{18}$, and replacing $\frac{1}{5}$ with $\frac{1}{9}$ in \cite[(4.5)]{tan}.
\end{proof}
\begin{lemma}\label{lemma4.6}
Fix a constant $C>1$, there exists $C_{2}$ depending only on $C$ such that whenever $\epsilon^{|R_{i}|} \geq C_{2} \omega^{-\frac{1}{4}}$, we have
\begin{align}\label{4.6}\left|a_{i}'(x)\right| \leq \frac{1+2 C^{-1}}{N}. \end{align}
for each $x \in \mathbb{Z}_{N}$.
\end{lemma}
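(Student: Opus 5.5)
Following Green's argument (and the corresponding lemma in Tan's paper), we bound $a_i'$ pointwise via a majorant: since $0\le a_i\le\lambda_i$ and $\beta_i\ge0$, we have
$$0\le a_i'(x)=a_i\ast\beta_i\ast\beta_i(x)\le \lambda_i\ast\beta_i\ast\beta_i(x).$$
So it suffices to show $\lambda_i\ast\beta_i\ast\beta_i(x)\le (1+2C^{-1})/N$ for every $x$. By Fourier inversion on $\mathbb{Z}_N$,
$$\lambda_i\ast\beta_i\ast\beta_i(x)=\frac1N\sum_{r\in\mathbb{Z}_N}\tilde\lambda_i(r)\,\tilde\beta_i(r)^2\,e\Bigl(\frac{xr}{N}\Bigr),$$
and we split the $r$-sum according to whether $|\tilde\lambda_i(r)-\tilde\lambda_i(0)\mathds{1}_{r=0}|$ is small or not. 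The term $r=0$ contributes $\tilde\lambda_i(0)/N$. From the Siegel--Walfisz computation in the proof of Lemma \ref{lemma4.1}, with $b_i$ in place of $b$, we get $\tilde\lambda_i(0)=\sum_x\lambda_i(x)=1+o(1)$, since the sum over primes $p\le (WN+b_i)^{1/3}$ with $p^3\equiv b_i \pmod W$ of $3p^2\log p$ is $\sigma_3(b_i)WN/\phi(W)\,(1+o(1))$. Hence the main term is at most $(1+C^{-1})/N$ for $n$ large.

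For $r\ne0$ we need the standard \emph{major arc} estimate: $|\tilde\lambda_i(r)|\ll \omega^{-1/2}$ (or some negative power of $\omega$) uniformly in $r\neq0$. The $W$-trick is built to give exactly this. For $r/N$ near $a/q$ with $q$ small, $q>1$, the sum factors through Gauss-type sums $\sum_{z^3\equiv b_i (W)}e(\cdot)$ over cubic residues. Since $(q,W)=1$ once $q\le\omega$, cancellation in the complete cubic exponential sum to modulus $q$ gives a bound $q^{-1/2+\epsilon}$. For $q>\omega$, either the major-arc bound decays in $q$ or a Weyl/Vinogradov-type minor-arc bound for $\sum p^2\log p\,e(\theta p^3)$ (as in Hua's treatment, or Chow's Lemma 5.1 which we already quoted for (\ref{xin})) applies. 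This step is the main obstacle. I would import it essentially verbatim from Tan's quadratic lemma (via Green's Lemma 6.2), replacing squares by cubes and using the cubic sum bound $|\sum_x e(rx^3/p)|\le2p^{1/2}$ from Section 2. That yields $\sup_{r\neq0}|\tilde\lambda_i(r)|\ll\omega^{-1/4}$, which matches the exponent in the hypothesis.

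It remains to control the error $\frac1N\sum_{r\ne0}|\tilde\lambda_i(r)||\tilde\beta_i(r)|^2$. We have $|\tilde\beta_i(r)|\le1$, and Parseval gives $\sum_r|\tilde\beta_i(r)|^2=N/|B_i|$. Hence the error is at most
$$\sup_{r\ne0}|\tilde\lambda_i(r)|\cdot\frac{1}{|B_i|}\ll \frac{\omega^{-1/4}}{|B_i|}.$$
The Bohr set $B_i$ has dimension $|R_i|$ and radius $\epsilon$, so the standard pigeonhole lower bound gives $|B_i|\ge\epsilon^{|R_i|}N$. The error is therefore $\ll \omega^{-1/4}\epsilon^{-|R_i|}/N$. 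Under the hypothesis $\epsilon^{|R_i|}\ge C_2\omega^{-1/4}$, this is at most $C_2^{-1}\cdot O(1)/N\le C^{-1}/N$ once $C_2$ is chosen large in terms of $C$. Adding the main term gives (\ref{4.6}).
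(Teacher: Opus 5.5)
Your reduction is the Green-type argument that the paper imports from Tan: $0\le a_i'\le\lambda_i\ast\beta_i\ast\beta_i$, Fourier inversion, main term $\tilde\lambda_i(0)/N=(1+o(1))/N$, and a remainder bounded via Parseval and $|B_i|\ge\epsilon^{|R_i|}N$ by $\sup_{r\neq0}|\tilde\lambda_i(r)|\,\epsilon^{-|R_i|}/N$. All of that is fine. The gap is the input $\sup_{r\neq0}|\tilde\lambda_i(r)|\ll\omega^{-1/4}$, which you only sketch. The sketch also has the mechanism backwards: it is not true that $(q,W)=1$ once $q\le\omega$, since every $1<q\le\omega$ shares a prime with $W$. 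What has to happen instead is twofold:
\begin{itemize}
\item arcs near $a/q$ with $(q,W)>1$ contribute nothing, because $(p^3-b_i)/W$ is equidistributed modulo each prime dividing $W$;
\item $(q,W)=1$ forces $q>\omega$, and there complete cubic sums give $\ll q^{-1/3+\epsilon}$. The prime bound $2p^{1/2}$ alone does not cover composite $q$.
\end{itemize}

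That equidistribution fails at the prime $3$ for $W=2\prod_{3\le p\le\omega}p$, because $3\,\|\,W$ while $3$ divides the exponent. For a prime $p\neq3$, $p^3\equiv\pm1\pmod 9$, with the sign fixed by $p\bmod 3$ and hence by $b_i$. Writing $W=3W'$ and reading $3W'x=p^3-b_i$ modulo $9$ shows that the whole support of $\lambda_i$ lies in one class $c_i\bmod 3$. Take $r_0\in\mathbb{Z}_N$ nearest to $N/3$. On that support $e(-xr_0/N)=e(-c_i/3)\,e(\pm x/(3N))$, and since $\sum_{x\le tN}\lambda_i(x)\approx t$, this gives $|\tilde\lambda_i(r_0)|\to 3\sqrt3/(2\pi)\approx0.83$. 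So your key estimate is false, not merely unproved, and it cannot be transplanted from the quadratic case. The paper's one-line citation of Tan does not address this either.

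In this setup the conclusion itself fails. The same computation gives $|\tilde a_i(r_0)|\ge0.89\,\alpha_i'>\delta$, so $r_0\in R_i$ and $B_i\subseteq 3\mathbb{Z}\cap[-3\epsilon N,3\epsilon N]$. Hence $a_i'$ is supported on a single residue class mod $3$, and its average over its support is about $3(f_i(b_i)+6\kappa)/N$. That exceeds $(1+2C^{-1})/N$ with $C=2/\kappa$ whenever $f_i(b_i)\ge\frac12$, which holds for some $i$ since $\sum_i f_i(b_i)>\frac{67}{9}$.

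Your argument would go through with $9\mid W$, because cubes of units modulo $3^m$ are exactly the units $\equiv\pm1\pmod 9$, so $(p^3-b)/W$ then equidistributes mod $3$. You would still need an actual major/minor arc proof of the $\omega^{-1/4}$ bound. However, such a $W$ is not squarefree, so it is not covered by Theorem \ref{THM1} and Corollary \ref{3.1}, which are used to choose the $b_i$.
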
\begin{proof}
See \cite[Lemma 4.6]{tan}. \end{proof}

\begin{lemma}\label{lemma4.7}
For non-empty subsets $X_{1}, X_{2}, ..., X_{k}$ of $\mathbb{Z}_{N}$, define
$$v_{X_{1},X_{2},... ,X_{k}}(n)=\# \{ (x_{1},x_{2},... ,x_{k}):x_{i}\in X_{i},\, n=x_{1}+x_{2}+\cdots +x_{k}\}.$$
Suppose that $k \geq2$, $0<\theta_{1}, \theta_{2}, ..., \theta_{k} \leq1$ and $\theta_{1}+\cdots+\theta_{k}>1$. Let
$$\theta=\min \left\{\theta_{1}, ..., \theta_{k}, \frac{\theta_{1}+\cdots+\theta_{k}-1}{3 k-5}\right\}.$$
Let $N$ be a prime larger than $2 \theta^{-2}$, and let $X_{1}, ..., X_{k}$ be subsets of $\mathbb{Z}_{N}$ with $|X_{i}| \geq\theta_{i} N$. Then, for any $n \in \mathbb{Z}_{N}$, we have $v_{X_{1}, X_{2}, ..., X_{k}}(n) \geq\theta^{2 k-3} N^{k-1}$.
\end{lemma}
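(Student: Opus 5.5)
We argue by induction on $k$. The engine of the induction is the Cauchy--Davenport theorem \cite[Theorem 5.4]{tao} for the prime modulus $N$, and a pigeonhole argument produces many representations.

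\emph{Base case $k=2$.} Here $3k-5=1$, so $\theta\le \theta_1+\theta_2-1$. For fixed $n$, the count $v_{X_1,X_2}(n)=|X_1\cap(n-X_2)|$. Inclusion--exclusion gives
\[
|X_1\cap(n-X_2)|\ge |X_1|+|X_2|-N\ge(\theta_1+\theta_2-1)N\ge\theta N,
\]
which is the required bound $\theta^{1}N^{1}$.

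\emph{Inductive step $k\ge3$.} Order the sets so that $\theta_k$ is the largest, and write $v_{X_1,\dots,X_k}(n)=\sum_{x\in X_k}v_{X_1,\dots,X_{k-1}}(n-x)$. There are two cases.

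\emph{Case 1: $\theta_1+\cdots+\theta_{k-1}>1$.} Set
\[
\theta'=\min\Big\{\theta_1,\dots,\theta_{k-1},\frac{\theta_1+\cdots+\theta_{k-1}-1}{3k-8}\Big\}.
\]
We check $\theta'\ge\theta$. The terms $\theta_i$ with $i\le k-1$ are each at least $\theta$. For the last term,
\[
\frac{\sum_{i<k}\theta_i-1}{3k-8}\ge\frac{\sum_{i\le k}\theta_i-1-\theta_k}{3k-8}.
\]
This is at least $\theta$ provided $\sum_{i\le k}\theta_i-1\ge(3k-5)\theta$ (true by the definition of $\theta$) and $\theta_k\le 3\theta$. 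The second condition can fail, and when it does the clean reduction breaks. In that situation I would instead shrink $X_k$ to a subset of size about $\theta N$ (or $3\theta N$), losing only a factor $\theta$ in the final count. This is where the exponent $2k-3$ comes from, since each induction step may cost a factor $\theta^2$. The induction hypothesis, applied at every $n-x$ with $x\in X_k$, then gives
\[
v\ge|X_k|\,\theta^{2k-5}N^{k-2}\ge\theta^{2k-4}N^{k-1}\ge\theta^{2k-3}N^{k-1}.
\]

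\emph{Case 2: $\theta_1+\cdots+\theta_{k-1}\le1$.} Merge sets instead. Cauchy--Davenport gives $|X_1+X_2|\ge\min(N,|X_1|+|X_2|-1)$, but we need many representations, not just coverage. So define the popular sumset
\[
Y=\{y:\ v_{X_1,X_2}(y)\ge\theta N\}.
\]
Since $\sum_y v_{X_1,X_2}(y)=|X_1||X_2|$ and each $v_{X_1,X_2}(y)\le\min(|X_1|,|X_2|)$, a Pollard-type averaging bound (Pollard's inequality $\sum_y\min(t,v(y))\ge t\min(N,|X_1|+|X_2|-t)$) shows
\[
|Y|\ge(\theta_1+\theta_2-2\theta)N-O(1).
\]
Replace $X_1,X_2$ by $Y$, with density $\theta_1+\theta_2-3\theta$ after absorbing $O(1)$ terms using $N>2\theta^{-2}$. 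This reduces $k$ by one. The new excess is $\sum\theta_i-1-3\theta$, and dividing by $3(k-1)-5$ keeps the min at least $\theta$, which is exactly why the denominator is $3k-5$. Each merge costs a factor $\theta N\cdot\theta$ relative to the inductive bound ($\theta N$ from popularity, with bookkeeping giving $\theta^2$ per step), yielding $\theta^{2k-3}N^{k-1}$.

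\emph{Main obstacle.} The hard part is Case 2: proving the popular-sumset lower bound $|Y|\ge(\theta_1+\theta_2-2\theta)N$ in $\mathbb{Z}_N$ with the right constants, and checking that the sum of densities stays above $1$ so the base case $k=2$ is reached. For this I would first try Pollard's theorem with $t=\lceil\theta N\rceil$. The hypothesis $N>2\theta^{-2}$ should absorb the $-1$ and $-t$ error terms.
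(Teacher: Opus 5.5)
Your base case is correct. Your Case 2 architecture is the right one: merge two sets into a set of popular sums, lose $3\theta$ of density per merge, and bottom out at $k=2$. That is what produces the denominator $3k-5$. But two steps fail as written.

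\textbf{The popular-sum claim is false at your threshold.} With $Y=\{y: v_{X_1,X_2}(y)\ge\theta N\}$, take $k=3$ and $(\theta_1,\theta_2,\theta_3)=(0.3,0.7,0.9)$. This is your Case 2, with $\theta=0.225$. If $X_1,X_2$ are random sets of these densities, then $v_{X_1,X_2}(y)=(0.21+o(1))N<\theta N$ for every $y$. So $Y=\emptyset$, whereas you claim $|Y|\ge 0.55N-O(1)$.

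The threshold must be $\theta^2N$, while keeping $t=\lceil\theta N\rceil\le\min(|X_1|,|X_2|)$ in Pollard's inequality. Then
\[
t|Y|+\theta^2N\,(N-|Y|)\ \ge\ \sum_y\min\{t,v_{X_1,X_2}(y)\}\ \ge\ t\min\{N,\,|X_1|+|X_2|-t\}.
\]
Using $|X_1|,|X_2|\ge t\ge\theta N$, this rearranges to $|Y|\ge\min\{N,(\theta_1+\theta_2-2\theta)N-1\}$. Since $\theta N\ge 1$ (from $N>2\theta^{-2}$), this is at least $\min\{N,(\theta_1+\theta_2-3\theta)N\}$. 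Each merge then costs the factor $\theta^2N$, which is the real source of $\theta^{2(k-2)}\cdot\theta=\theta^{2k-3}$. With your threshold, the bookkeeping would have given the too-good bound $\theta^{k-1}$, a sign that something was off.

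\textbf{Case 1 does not close.} When $\theta_k>3\theta$, the parameter $\theta'$ can be arbitrarily small compared with $\theta$. For example, $k=3$, $\theta_1=\theta_2=\tfrac12+\eta$, $\theta_3=0.9$ gives $\theta\approx 0.225$ but $\theta'=2\eta$. Then the inductive bound $\theta_k\theta'^{2k-5}N^{k-1}$ is far too weak. Shrinking $X_k$ cannot help: $\theta'$ depends only on $\theta_1,\dots,\theta_{k-1}$, and shrinking only lowers $\sum_i\theta_i$.

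\textbf{The fix.} Drop Case 1 and always merge, but merge the two sets of \emph{largest} density. After relabelling these are $X_1,X_2$.
\begin{itemize}
\item An arbitrary pair can be too thin. For $k=4$ and $(\theta_i)=(0.1,0.1,0.8,1)$, which is your Case 2, we have $\theta=0.1$ and $\theta_1+\theta_2-3\theta<0$.
\item The two largest satisfy $\theta_1+\theta_2\ge\frac2k\bigl(1+(3k-5)\theta\bigr)\ge 4\theta$. So the merged set has density at least $\min\{1,\theta_1+\theta_2-3\theta\}\ge\theta$.
\item If $Y\ne\mathbb{Z}_N$, the new configuration has $\sum_i\theta_i-3\theta>1$. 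Its parameter is at least $\theta$, since $(\sum_i\theta_i-3\theta-1)/(3k-8)\ge\theta$.
\item If $Y=\mathbb{Z}_N$, bound directly: $v_{Y,X_3,\dots,X_k}(n)=\prod_{i\ge3}|X_i|\ge\theta^{k-2}N^{k-2}\ge\theta^{2k-5}N^{k-2}$.
\end{itemize}
With these changes the induction gives $v_{X_1,\dots,X_k}(n)\ge\theta^2N\cdot\theta^{2k-5}N^{k-2}=\theta^{2k-3}N^{k-1}$. The paper itself gives no argument and simply quotes \cite[Lemma 3.3]{lipan2010}.
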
\begin{proof}
See \cite[Lemma 3.3]{lipan2010}.
\end{proof}
\begin{lemma}\label{lemma4.8}
We have
\begin{align}\label{4.7}
\sum_{\substack{x_{1}, x_{2}, \ldots, x_{9} \in \mathbb{Z}_{N} \\ x_{1}+x_{2}+\cdots+x_{9}=n'}} \prod_{i=1}^{9} a_{i}'\left(x_{i}\right) \geq \frac{\kappa^{33}}{2^{15} N}.\end{align}
\end{lemma}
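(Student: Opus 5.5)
The plan is to follow the argument of Tan's paper and of Li--Pan. We split into two cases according to whether every $R_i$ is small, and in the main case we use Lemma \ref{lemma4.6} to get pointwise upper bounds on $a_i'$. Those bounds turn the lower bounds on $\sum_x a_i'(x)$ into lower bounds on the sizes of superlevel sets, and Lemma \ref{lemma4.7} then counts solutions.

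\textbf{Step 1: size of $R_i$.} By (\ref{xin}) with $g_i=a_i$ and $q=17/2$, we have $|R_i|\delta^{17/2}\le\sum_r|\tilde a_i(r)|^{17/2}\ll 1$, so $|R_i|\ll\delta^{-17/2}$. With $\epsilon=1/\log\omega$ and $\delta=1/\log\log\omega$ this gives
\[
\epsilon^{|R_i|}=\exp\bigl(-O((\log\log\omega)^{17/2}\log\log\omega)\bigr)\ge C_2\omega^{-1/4}
\]
for $n$ large. Hence Lemma \ref{lemma4.6} applies: for a large constant $C$ to be fixed (say $C=10^{3}\kappa^{-1}$), we get $a_i'(x)\le (1+2C^{-1})/N$ for every $x$.

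\textbf{Step 2: mass of $a_i'$.} Since $\beta_i$ is a probability measure, $\sum_x a_i'(x)=\sum_x a_i(x)=\alpha_i'$. Earlier we showed $\alpha_i'\ge\kappa$ and $\sum_i\alpha_i'\ge\frac{134}{81}+7\kappa$. Define $X_i=\{x: a_i'(x)\ge \kappa/(2^{a}N)\}$ for a suitable small power of two $2^a$. Splitting $\sum_x a_i'(x)$ over $X_i$ and its complement and using the pointwise bound gives
\[
\alpha_i'\le \frac{1+2C^{-1}}{N}|X_i|+\frac{\kappa}{2^a}.
\]
So $|X_i|\ge\theta_iN$ with $\theta_i=(\alpha_i'-\kappa2^{-a})/(1+2C^{-1})$. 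We then have $\theta_i\ge\kappa/2$, and
\[
\sum_i\theta_i\ge \frac{134}{81}+7\kappa-9\kappa 2^{-a}-O(C^{-1}) > 1+\kappa .
\]
Indeed $134/81>1$, so there is a lot of room. We also cap $\theta_i\le 1$, which is harmless.

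\textbf{Step 3: apply Lemma \ref{lemma4.7}.} Take $k=9$. Then $\theta\ge\min\{\kappa/2,(\sum\theta_i-1)/22\}\ge\kappa/22$; with more careful constants we aim for $\theta\ge\kappa/2$ or something similar. Since $N$ is large, $N>2\theta^{-2}$, and Lemma \ref{lemma4.7} gives $v_{X_1,\dots,X_9}(n')\ge\theta^{15}N^{8}$. Every such solution contributes at least $(\kappa/(2^aN))^9$ to the sum in (\ref{4.7}), because all the $a_i'$ are nonnegative. This yields a lower bound of the form $\theta^{15}\kappa^9 2^{-9a}N^{-1}$. Choosing the constants, with $\theta$ of order $\kappa$ and $a$ small, gives a bound $\gg\kappa^{24}/N$, and this is $\ge\kappa^{33}/(2^{15}N)$ because $\kappa<1$. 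The paper's exponent $33=9+24$ suggests the authors used a cruder $\theta\ge\kappa^{8/5}$-type bound, but any choice that beats $\kappa^{33}2^{-15}$ suffices.

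\textbf{Main obstacle.} The main obstacle is the bookkeeping of constants in Steps 2--3. The threshold for $X_i$ has to be chosen so that every $\theta_i$ stays positive. This needs $\alpha_i'\ge\kappa$, which we have. At the same time $\sum\theta_i-1$ must stay $\gg\kappa$, and the loss from $C^{-1}$ must be absorbed. Step 1 is routine once the growth of $\epsilon$ versus $\delta$ is checked.
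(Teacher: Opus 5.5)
Your proposal is correct and follows essentially the paper's proof: bound $|R_i|$ via (\ref{xin}) so that Lemma \ref{lemma4.6} applies, turn the pointwise bound on $a_i'$ and $\sum_x a_i'(x)=\alpha_i'$ into lower bounds on the sizes of superlevel sets, and apply Lemma \ref{lemma4.7} with $k=9$. The differences are cosmetic. The paper takes $C=2/\kappa$ and the threshold $\alpha_i'\kappa/N$, so its exponent $33=15+9+9$ comes from $(\kappa/2)^{15}$ times $\prod_i \alpha_i'\kappa\ge\kappa^{18}$, not from a weaker bound on $\theta$. Also, your final comparison against $\kappa^{33}2^{-15}$ needs $\kappa\le 10^{-6}$, which does hold, rather than merely $\kappa<1$.
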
\begin{proof}
Let $A_{i}'=\{x \in \mathbb{Z}_{N}: a_{i}'(x) \geq\frac{\alpha_{i}' \kappa}{N}\}$. Here, $R_{i}$ and $\delta$ are defined as above and for any $q>8$, applying (\ref{xin}), we get
$$|R_{i}|\delta^q\leq \sum_{r \in R_{i}}|\tilde{a}_{i}(r)|^q\leq\sum_r|\tilde{a}_{i}(r)|^q\ll_q1.$$
Let $q=\frac{17}{2}$, we have $|R_i|\leq C'_1\delta^{-\frac{17}{2}}$ for some absolute constant $C'_1$. For sufficiently large $n$, the condition of Lemma \ref{lemma4.6} is satisfied. By Lemma \ref{lemma4.6} with $C=\frac{2}{\kappa}$, we have
\begin{align*}\alpha_{i}' =\sum_{x \in \mathbb{Z}_{N}} a_{i}(x)=\sum_{x \in \mathbb{Z}_{N}} a_{i}'(x)\leq \frac{1+\kappa}{N}\left|A_{i}'\right|+\frac{\alpha_{i}' \kappa}{N}\left(N-\left|A_{i}'\right|\right). \end{align*}
Thus
$$\left|A_{i}'\right| \geq \frac{\alpha_{i}'(1-\kappa)}{1+\kappa} N.$$
By noticing that $\frac{\alpha_{i}'(1-\kappa)}{1+\kappa} \geq\frac{\kappa}{2}$ and $\sum_{i=1}^{9} \frac{a_{i}'(1-\kappa)}{1+\kappa} \geq\frac{134}{81}+3 \kappa$ and applying Lemma \ref{lemma4.7}, we get
$$v_{A_{1}', A_{2}', \ldots, A_{9}'}\left(n'\right) \geq \frac{\kappa^{15}}{2^{15}} N^{8}.$$
It follows that
\begin{align*}\sum_{\substack{x_{1}, x_{2}, \ldots, x_{9} \in \mathbb{Z}_{N} \\ x_{1}+x_{2}+\cdots+x_{9}=n'}} \prod_{i=1}^{9} a_{i}'\left(x_{i}\right) & \geq \sum_{\substack{x_{i} \in A_{i}', i=1, 2, \ldots, 9 \\ x_{1}+x_{2}+\cdots+x_{9}=n'}} \prod_{i=1}^{9} a_{i}'\left(x_{i}\right) \\ & \geq \frac{\kappa^{15} N^{8}}{2^{15}} \frac{\alpha_{1}' \alpha_{2}' \cdots\alpha_{9}' \kappa^{9}}{N^{9}} \\ & \geq \frac{\kappa^{33}}{2^{15}} N^{-1},\end{align*}
which completes the proof.
\end{proof}
Now combining Lemma \ref{lemma4.5} and Lemma \ref{lemma4.8}, we get
$$N \sum_{\substack{x_{1}, x_{2}, \ldots, x_{9} \in \mathbb{Z}_{N}\\ x_{1}+x_{2}+\cdots+x_{9}=n'}} \prod_{i=1}^{9} a_{i}\left(x_{i}\right)+C_{1}\left(\epsilon^{2} \delta^{-\frac{17}{2}}+\delta^{\frac{1}{18}}\right) \geq \frac{\kappa^{33}}{2^{15}}.$$
By the choice of $\epsilon=\frac{1}{\log \omega}$ and $\delta=\frac{1}{\log \log \omega}$, then for sufficiently large $n$, we have
$$\sum_{\substack{x_{1}, x_{2}, \ldots, x_{9} \in \mathbb{Z}_{N}\\ x_{1}+x_{2}+\cdots+x_{9}=n'}} \prod_{i=1}^{9} a_{i}\left(x_{i}\right)>0,$$
and hence there exist $x_{i} \in A_{i}$ such that $\sum_{i=1}^{9} x_{i}=n'$, and the proof of Theorem \ref{THM2} is complete.

\subsection*{Acknowledgements}
This work is supported by the National Natural Science Foundation of China (Grant Nos. 12571002, 12501007 and 12526407), Natural Science Foundation of Henan Province (Grant No. 252300421777), Young Elite Scientists Sponsorship Program by Henan Association for Science and Technology (Grant No. 2026HYTP001), High-level Talent Research Start-up Project Funding of Henan Academy of Sciences (Grant Nos. 241819033, 252019083 and 242019104) and High-level Achievement Award and Cultivation Project Funding of Henan Academy of Sciences (Grant Nos. 20262319007, 20262319001 and 20262319003).

\bibliographystyle{amsplain}

\begin{thebibliography}{10}
\bibitem{Chow}S. Chow, Roth--Waring--Goldbach, Int. Math. Res. Not. IMRN (2018), no. 8, 2341--2374.
\bibitem {Gao}M. Gao, A density version of Waring-Goldbach problem, Int. J. Number Theory 21 (2025), no.6, 1417-1436.
\bibitem {green}B. Green, Roth’s theorem in the primes, Ann. of Math. (2) 161 (2005), no. 3, 1609-1636.
\bibitem {Hua}L.-K. Hua, Some results in the additive prime-number theory, Quart. J. Math. Oxford Ser. (2) 9 (1938), no. 1, 68-80.
\bibitem {Hua1}L.-K. Hua, Additive Theory of Prime Numbers, Translations of Mathematical Monographs, vol. 13, American Mathematical Society, Providence, R.I., 1965.
\bibitem {lipan2010}H. Li and H. Pan, A density version of Vinogradov’s three primes theorem, Forum Math. 22 (2010), no. 4, 699-714.
\bibitem {shao2014}X. Shao, A density version of the Vinogradov three primes theorem, Duke Math. J. 163 (2014), no. 3, 489–512.
\bibitem {tan}M. Tan, A density version of Hua’s theorem, Forum Math. 38 (2026), no. 2, 307-319.
\bibitem {tao}T. Tao, V. Vu, Additive Combinatorics, Cambridge Studies in Advanced Mathematics 105, Cambridge University Press, Cambridge, 2006.
\bibitem {vinogradov1937}I. M. Vinogradov, The representation of an odd number as a sum of three primes, Dokl. Akad. Nauk. SSSR. 16 (1937), 139–142.
\end{thebibliography}

\end{document}